\newtheorem{theorem}{Theorem}[section]
\newtheorem*{theorem-non}{Theorem}
\newtheorem{definition}[theorem]{Definition}
\newtheorem{proposition}[theorem]{Proposition}
\newtheorem{lemma}[theorem]{Lemma}
\newtheorem{fact}[theorem]{Remark}
\newcommand{\N}{{\mathbb N}}
\newcommand{\roneq}{\sigma_{1}}
\newcommand{\ronestarq}{\sigma^*_{1}}
\newcommand{\rtwoq}{\sigma_{2}}
\newcommand{\rthreeq}{\sigma_{3}}
\newcommand{\rfourq}{\sigma_{4}}
\newcommand{\rsixq}{\sigma_{5}}
\newcommand{\cmh}{H}
\title{On modified Halpern and Tikhonov-Mann iterations}
\author{\!\!\!Hora\c tiu Cheval${}^{\MakeLowercase{a}}$, Ulrich Kohlenbach${}^{\MakeLowercase{b}}$, Lauren\c{t}iu Leu\c{s}tean${}^{\MakeLowercase{a,c,d}}$}\parindent0pt
\address{${}^a$ LOS, Faculty of Mathematics and Computer Science, University of Bucharest,\newline
${}^b$ Department of Mathematics, Technische Universit\"at Darmstadt,\newline
${}^c$  Simion Stoilow Institute of Mathematics of the Romanian Academy,\newline 
${}^{d}$ Institute for Logic and Data Science,Bucharest,\\[1mm]
Email: \rm horatiu.cheval@unibuc.ro, kohlenbach@mathematik.tu-darmstadt.de, 
laurentiu.leustean@unibuc.ro}
\begin{document}


\begin{abstract}
We show that the asymptotic regularity and the
strong convergence of the modified Halpern iteration due to T.-H. Kim and 
H.-K. Xu and studied further by A. Cuntavenapit and B. Panyanak
and the Tikhonov-Mann iteration introduced by H. Cheval and L. Leu\c{s}tean 
as a generalization of an iteration due to Y. Yao et al. that has recently 
been studied by Bo\c{t} et al. can be reduced to each other in general 
geodesic settings. This, in particular, gives a new proof of the convergence 
result in Bo\c{t} et al. together with a generalization from Hilbert
to CAT(0) spaces. Moreover, quantitative rates of asymptotic regularity and 
metastability due to K. Schade and U. Kohlenbach can be adapted and 
transformed into rates for the Tikhonov-Mann iteration 
corresponding to recent quantitative results on the latter of H. Cheval, 
L. Leu\c{s}tean and B. Dinis, P. Pinto respectively. A transformation in 
the converse direction is also possible. We also obtain 
rates of asymptotic regularity of order $O(1/n)$ for both the modified Halpern 
(and so in particular for the Halpern iteration) 
and the Tikhonov-Mann iteration in a general geodesic setting for a special 
choice of scalars.\\

\noindent {\em Keywords:}  Mann iteration; Halpern iteration; Tikhonov regularization; Rates of asymptotic regularity; Rates of metastability; Proof mining. \\

\noindent  {\it Mathematics Subject Classification 2020}:  47J25, 47H09, 03F10.

\end{abstract}

 \maketitle
 
\section{Introduction}
 
We consider in the sequel generalizations of the well-known Mann and Halpern iterations obtained by 
combining them with the so-called Tikhonov regularization terms \cite{Att96,LehMou96}. Although we will in the rest of the paper work in a general geodesic 
setting we first discuss these iterations for simplicity 
in the context of linear normed spaces, where $X$ is a Banach space, $C\subseteq X$ is a convex subset, and $T:C\to C$ is a nonexpansive mapping.

One such generalization is the \textit{Tikhonov-Mann iteration}, defined in \cite{CheLeu22} as follows:
\begin{equation}
x_{n + 1} = (1 - \lambda_n)((1-\beta_n)u+\beta_nx_n) + \lambda_n T\big((1-\beta_n)u+\beta_nx_n\big),
\label{def-TKM-Banach} 
\end{equation}
where $(\lambda_n)_{n \in \N},(\beta_n)_{n \in \N}$ are sequences in $[0, 1]$ and $x_0,u\in C$. 
Obviously, if $\beta_n=1$, then  $(x_n)$ becomes the Mann iteration. For $u=0$ one 
gets a modified Mann iteration, studied in \cite{YaoZhoLio09} and rediscovered in a recent paper \cite{BotCseMei19}.

Another generalization is the \textit{modified Halpern iteration}, introduced in \cite{KimXu05}:
\begin{align}
y_{n + 1} := \gamma_n v + (1 - \gamma_n)(\alpha_n y_n + (1 - \alpha_n)T y_n),
\label{def-MH-Banach}
\end{align}
where $(\gamma_n)_{n \in \N}$ and $(\alpha_n)_{n \in \N}$ are sequences in $[0, 1]$ and $v, y_0 \in C$. \\ 
\cite{KimXu05} showed the strong convergence of $(y_n)$ in uniformly 
smooth Banach spaces under certain conditions on the scalars and assuming that $T$ has a fixed point. Under somewhat 
more liberal conditions, \cite{CunPan11} 
showed the strong convergence in the nonlinear 
setting of CAT(0) spaces. \\[1mm]
In this paper we establish, in a general nonlinear setting, a strong connection between the modified Halpern and the Tikhonov-Mann iteration schemes. 
From this connection it follows that the strong convergence of one scheme 
implies that of the other. In particular, the strong convergence 
theorem of \cite{BotCseMei19} follows from the much older results on the 
modified Halpern iteration and - by a slight modification of the argument 
provided in \cite{CunPan11} 
- also under the exact same conditions on the scalars 
as assumed in \cite{BotCseMei19}. Moreover, quantitative rates of asymptotic 
regularity and metastability (in the sense of Tao 
\cite{Tao(07),Tao(07a)}) for one scheme translate 
into corresponding rates for the other scheme. In 2012, Schade and the 
second author \cite{KohSch12} extracted rates of asymptotic regularity 
for the modified Halpern iteration in a general nonlinear setting and 
of metastability for CAT(0) spaces from the proof given in  
\cite{CunPan11}. We show that with a slight modification of the rate 
of asymptotic regularity one can in that extraction weaken again the conditions 
on the scalars to those used in  \cite{BotCseMei19}. By the aforementioned 
reduction of the Tikonov-Mann iteration to the modified Halpern iteration
this induces corresponding rates of asymptotic regularity in a general 
nonlinear setting and of metastability for CAT(0) spaces for the 
Tikhonov-Mann scheme. A rate of asymptotic regularity in this case has 
recently been extracted in \cite{CheLeu22} directly from 
the proof given in \cite{BotCseMei19} by the first and the 
third author and a rate of metastability has recently been obtained for 
the modified Mann iteration, a special case of the Tikhonov-Mann iteration,
in the case of Hilbert spaces in \cite{DinPin21}. 
In \cite{DinPin22} the authors introduce an alternating 
Halpern-Mann iteration and compute 
rates of metastability for this iteration in the setting of CAT(0) spaces. As
the Tikhonov-Mann iteration $(x_n)$ is a special case of the alternating Halpern-Mann iteration, one gets, as a corollary of
\cite[Theorem~5.1]{DinPin22} rates of metastability for $(x_n)$. However, 
the proof of \cite[Theorem~5.1]{DinPin22} uses a stronger condition on 
the scalars than in our result. \\
Conversely, rates for the Tikhonov-Mann iteration  imply - by the 
connection established in this paper - corresponding rates for the modified 
Halpern iteration. \\ For some special test case for the choice of scalars, 
we for the first time obtain rates of asymptotic regularity of order 
$O(1/n)$ for both iterations which are new even in the linear case.

 \section{$W$-spaces}

Firstly, let us recall some basic notions  from geodesic geometry. 
We refer to \cite{Pap05} for details. Let  $(X,d)$ be a metric space. 
A geodesic path (or simply a geodesic) in $X$ is a function $\gamma:[a,b]\to X$ 
which is distance-preserving, that is $
d(\gamma(s),\gamma(t))=|s-t| \text{~~for all~~} s,t\in [a,b]. $ 
A geodesic segment in $X$ is the image of a geodesic in $X$. If $\gamma:[a,b]\to X$ is 
a geodesic, $\gamma(a)=x$ and $\gamma(b)=y$, we say that 
the geodesic  $\gamma$ joins $x$ and $y$ or that the geodesic  segment $\gamma([a,b])$ 
joins $x$ and $y$.
The metric space $(X,d)$ is (uniquely) geodesic if every two points of $X$ are joined by a (unique) 
geodesic segment. The following useful properties are well-known.

\begin{lemma}\label{geodesic-prop}
Assume that $X$ is a geodesic space.
\begin{enumerate}
\item\label{geodesic-prop-1} Let $x,y\in X$ and $\gamma([a,b])$  be a geodesic segment that joins $x$ and $y$. 
For every $\lambda\in [0,1]$, $z=\gamma((1-\lambda)a+\lambda b)$ 
is the unique point in $\gamma([a,b])$ satisfying $d(z,x)=\lambda d(x,y)$, 
and this unique $z$ satisfies also $d(z,y)=(1-\lambda)d(x,y)$.
\item\label{geodesic-prop-unique} The following are equivalent: 
\begin{enumerate}
\item $X$ is uniquely geodesic.
\item For any $x,y\in X$  and any $\lambda\in [0,1]$ there exists a unique element $z\in X$ such that
$d(x,z) = \lambda d(x,y) \text{~and~}  d(y,z) = (1-\lambda)d(x,y)$.
\end{enumerate}
\end{enumerate}
\end{lemma}

\mbox{}

As in \cite{CheLeu22}, we consider a $W$-space to be a metric space $(X,d)$  together with 
a function $W:X\times X\times [0,1]\to X$. We think of $W(x,y,\lambda)$ as an abstract convex 
combination of the points $x,y\in X$. That is why we shall write $(1-\lambda)x + \lambda y$ 
instead of $W(x,y,\lambda)$. 

In the sequel, we denote a $W$-space simply by $X$. Let us define, for any $x,y\in X$, 
$[x,y]=\{(1-\lambda)x + \lambda y \mid \lambda\in[0,1]\}$.
A nonempty subset $C\subseteq X$  is said to be convex if for all $x,y\in C$, we have that $[x,y]\subseteq C$. 

Consider the following axioms:
$$
\begin{array}{ll}
\text{(W1)} & d(z,(1-\lambda)x + \lambda y) \le  (1-\lambda)d(z,x)+\lambda d(z,y),\\[1mm]
\text{(W2)} & d((1-\lambda)x + \lambda y,(1-\tilde{\lambda})x + \tilde{\lambda} y) =  \vert \lambda-\tilde{\lambda} \vert d(x,y), \\[1mm]
\text{(W3)} & (1-\lambda)x + \lambda y = \lambda y + (1-\lambda)x, \\[1mm]
\text{(W4)} & d((1-\lambda)x + \lambda z,(1-\lambda)y + \lambda w) \le (1-\lambda)d(x,y)+\lambda d(z,w),\\[1mm]
\text{(W5)} & 1x + 0 y=x \text{~and~} 0x + 1 y=y, \\[1mm]
\text{(W6)} & (1-\lambda)x + \lambda x=x, \\[1mm] 
\text{(W7)} &  d(x,(1-\lambda)x + \lambda y)\!=\!\lambda d(x,y) \text{~and~} d(y,(1-\lambda)x + \lambda y)\!=\!(1\!-\!\lambda)d(x,y).
\end{array}
$$

\noindent $W$-spaces satisfying (W1) were introduced by Takahashi \cite{Tak70} under the name of convex metric spaces.

\subsection{$W$-geodesic spaces}

\begin{definition}
A $W$-geodesic space is a  $W$-space $X$  satisfying $(W2)$ and $(W5)$. 
\end{definition}

Let $X$ be a $W$-geodesic space.

\begin{proposition}
(W6) and (W7) hold.
\end{proposition}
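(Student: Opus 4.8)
The plan is to derive both (W6) and (W7) directly from the two available axioms, using (W5) to identify the endpoints $x$ and $y$ with the parameter values $0$ and $1$ in the convex-combination notation, and using (W2) as a distance formula that we then evaluate at these endpoints.

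I would treat (W7) first, as it is the more direct of the two. By (W5) we have $x = (1-0)x + 0y$ and $y = (1-1)x + 1y$, so the endpoints are themselves convex combinations of $x$ and $y$ with parameters $0$ and $1$ respectively. Substituting these into (W2) yields the two desired identities at once: the pair of parameters $(0,\lambda)$ gives $d(x,(1-\lambda)x + \lambda y) = |0-\lambda|\,d(x,y) = \lambda\, d(x,y)$, while the pair $(\lambda,1)$ gives $d((1-\lambda)x + \lambda y, y) = |\lambda - 1|\,d(x,y) = (1-\lambda)d(x,y)$, and the symmetry of $d$ then delivers the stated form.

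For (W6) the key move is to apply (W2) with the two base points equal, i.e.\ to set $y = x$. Then, since $d(x,x) = 0$, axiom (W2) reads $d((1-\lambda)x + \lambda x,(1-\tilde{\lambda})x + \tilde{\lambda} x) = |\lambda - \tilde{\lambda}|\,d(x,x) = 0$ for all $\lambda,\tilde{\lambda} \in [0,1]$. Thus all convex combinations of $x$ with itself coincide; choosing $\tilde{\lambda} = 0$ and using (W5) to identify $(1-0)x + 0x$ with $x$, I conclude $d((1-\lambda)x + \lambda x, x) = 0$, whence $(1-\lambda)x + \lambda x = x$ by the separation axiom of the metric.

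The only step requiring a spark of ingenuity is the substitution $y = x$ in (W2) for the proof of (W6); once one thinks to collapse the two base points, the vanishing of $d(x,x)$ does all the work. Otherwise I anticipate no genuine obstacle, since everything reduces to inserting the boundary parameter values supplied by (W5) into the metric identity (W2).
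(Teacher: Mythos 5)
Your proof is correct and takes essentially the same route as the paper's (which is just a one-line hint): both derive (W7) by substituting the endpoint representations $x = 1x + 0y$ and $y = 0x + 1y$ from (W5) into the distance formula (W2), and both obtain (W6) by applying (W2) with the two base points collapsed to $x$, using $1x + 0x = x$ from (W5) and the separation property of the metric. There are no gaps; your write-up simply makes explicit the computations the paper leaves to the reader.
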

\begin{proof}
Use that, by (W5), $x=1x + 0 x=1x + 0y$, $y=0x + 1 y$, and apply (W2).
\end{proof}

Define, for any $x \ne y\in X$, the mapping
\begin{align*} 
W_{xy}:[0,d(x,y)]\to X,  &  \qquad W_{xy}(s)=\left(1-\frac{s}{d(x,y)}\right)x + \frac{s}{d(x,y)}y.
\end{align*}
We also define, for uniformity,  $W_{xx}:\{0\}\to X, \, \, W_{xx}(0)=x$.

\begin{proposition}
For all $x, y\in X$, $W_{xy}$ is a geodesic that joins $x$ and $y$ such that $W_{xy}([0,d(x,y)])=[x,y]$. 
Thus, $[x,y]$ is a geodesic segment that joins $x$ and $y$.
\end{proposition}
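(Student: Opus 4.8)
The plan is to reduce the entire statement to axioms (W2) and (W5), with the key device being the arc-length reparametrization $\lambda = s/d(x,y)$ built into the very definition of $W_{xy}$. First I would dispose of the degenerate case $x = y$: here the domain of $W_{xx}$ is the singleton $\{0\}$ with $W_{xx}(0) = x$, so $W_{xx}$ is vacuously distance-preserving, and by (W6) we have $[x,x] = \{x\} = W_{xx}(\{0\})$, settling all three assertions at once. In what follows I therefore assume $x \ne y$, so that $d(x,y) > 0$ and the division defining $W_{xy}$ on $[0, d(x,y)]$ is legitimate.

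The core step is to check that $W_{xy}$ is distance-preserving, which is exactly what it means to be a geodesic. Given $s, t \in [0, d(x,y)]$, I would set $\lambda = s/d(x,y)$ and $\tilde\lambda = t/d(x,y)$, both lying in $[0,1]$, so that $W_{xy}(s) = (1-\lambda)x + \lambda y$ and $W_{xy}(t) = (1-\tilde\lambda)x + \tilde\lambda y$. Axiom (W2) then gives directly $d(W_{xy}(s), W_{xy}(t)) = |\lambda - \tilde\lambda|\, d(x,y) = |s - t|$, since the two factors $1/d(x,y)$ and $d(x,y)$ cancel. This is where (W2) does all the work. To see that $W_{xy}$ joins $x$ and $y$, I would evaluate the endpoints: $W_{xy}(0) = 1x + 0y = x$ and $W_{xy}(d(x,y)) = 0x + 1y = y$, both by (W5).

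Finally, the image identity $W_{xy}([0, d(x,y)]) = [x,y]$ follows because the map $s \mapsto \lambda = s/d(x,y)$ is a bijection of $[0, d(x,y)]$ onto $[0,1]$; hence $\{W_{xy}(s) \mid s \in [0, d(x,y)]\}$ is precisely $\{(1-\lambda)x + \lambda y \mid \lambda \in [0,1]\} = [x,y]$ by the definition of $[x,y]$. The closing sentence of the proposition is then immediate, since $[x,y]$, being the image of the geodesic $W_{xy}$, is by definition a geodesic segment joining $x$ and $y$. I do not expect a genuine obstacle here: the whole argument hinges on recognizing that the rescaling by $d(x,y)$ turns the proportionality in (W2) into an honest isometry, and the only real care needed is to keep the $x = y$ case separate so that one never divides by zero.
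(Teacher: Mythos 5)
Your proposal is correct and follows essentially the same route as the paper's own proof: the degenerate case $x=y$ is handled via (W6), axiom (W2) yields that $W_{xy}$ is distance-preserving, (W5) gives the endpoint evaluations, and the bijection $s\mapsto s/d(x,y)$ between $[0,d(x,y)]$ and $[0,1]$ yields the image identity $W_{xy}([0,d(x,y)])=[x,y]$. You merely spell out the computations that the paper leaves as ``one can easily see.''
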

\begin{proof} The case $x=y$ is trivial, by (W6). Assume that $x\ne y$. 
One can easily see that, by (W2), $W_{xy}$ is a geodesic. Furthermore, by (W5), we have that 
$W_{xy}(0)=x$ and $W_{xy}(d(x,y))=y$. Thus, $W_{xy}$ is a geodesic that joins $x$ and $y$.
Since the mapping $[0,1]\to [0, d(x,y)], \, \lambda \mapsto \lambda d(x,y)$ is a bijection, it follows that 
$W_{xy}([0,d(x,y)])=[x,y]$.
\end{proof} 

In fact, a metric space is geodesic if and only if it is $W$-geodesic for some $W:X\times X\times [0,1]\to X$.

\begin{proposition}
For all $x, y\in X$ and all $\lambda \in [0,1]$, there exists a unique $z\in[x,y]$ 
(namely $z=(1-\lambda)x + \lambda y$) such that 
\begin{equation}
d(x,z)=\lambda d(x,y)\text{~and~}  d(y,z)=(1-\lambda)d(x,y). \label{W-geodesic-z-unique}
\end{equation}
\end{proposition}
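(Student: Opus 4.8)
The plan is to read off existence directly from axiom (W7) and to derive uniqueness either from (W7) together with the injectivity encoded in (W2), or by reducing to Lemma~\ref{geodesic-prop}(i) via the fact, just established, that $[x,y]$ is a geodesic segment. For existence I would simply set $z=(1-\lambda)x+\lambda y$; since (W7) has already been shown to hold, it gives $d(x,z)=\lambda d(x,y)$ and $d(y,z)=(1-\lambda)d(x,y)$, which is precisely \eqref{W-geodesic-z-unique}.

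For uniqueness I would split into two cases. If $x=y$, then (W6) forces $[x,y]=\{x\}$, so $z=x$ is the only candidate and both equations reduce to $0=0$; nothing more is needed. If $x\ne y$, I would take an arbitrary $z'\in[x,y]$ satisfying \eqref{W-geodesic-z-unique} and write it as $z'=(1-\mu)x+\mu y$ for some $\mu\in[0,1]$, which is possible by the very definition of $[x,y]$. Applying (W7) to $z'$ gives $d(x,z')=\mu\,d(x,y)$, and comparing with the hypothesis $d(x,z')=\lambda\,d(x,y)$ yields $\mu\,d(x,y)=\lambda\,d(x,y)$. Because $x\ne y$, we have $d(x,y)>0$, so I can cancel it to get $\mu=\lambda$, whence $z'=(1-\lambda)x+\lambda y=z$.

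As an alternative I would invoke the earlier results wholesale: the previous proposition identifies $[x,y]$ with $W_{xy}([0,d(x,y)])$, a genuine geodesic segment joining $x$ and $y$, so Lemma~\ref{geodesic-prop}(i) applies with $a=0$, $b=d(x,y)$ and $\gamma=W_{xy}$. Evaluating at $(1-\lambda)\cdot 0+\lambda\,d(x,y)=\lambda\,d(x,y)$ gives $z=W_{xy}(\lambda\,d(x,y))=(1-\lambda)x+\lambda y$, and the lemma hands over both the two distance identities and the uniqueness in one stroke.

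I expect no real obstacle here; the argument is essentially a bookkeeping exercise in the axioms. The two points that call for a little attention are the degenerate case $x=y$, where $W_{xy}$ is defined only on $\{0\}$ and one must argue via (W6) rather than the geodesic machinery, and the implication $\mu=\lambda\Rightarrow z'=z$. The latter is exactly the injectivity of the parametrization $\mu\mapsto(1-\mu)x+\mu y$, which (W2) supplies, since distinct parameters give points at distance $|\mu-\tilde\mu|\,d(x,y)>0$ whenever $x\ne y$.
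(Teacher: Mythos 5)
Your proposal is correct, and your primary argument takes a genuinely more elementary route than the paper's. The paper's entire proof is the one you sketch only as your alternative: it simply cites (W7), Lemma~\ref{geodesic-prop}.\eqref{geodesic-prop-1} and the preceding proposition (that $[x,y]=W_{xy}([0,d(x,y)])$ is a geodesic segment joining $x$ and $y$), so uniqueness there is inherited from the general metric fact about geodesic segments. Your main argument instead stays entirely inside the $W$-space axioms: existence from (W7), and uniqueness by writing an arbitrary competitor as $z'=(1-\mu)x+\mu y$, applying (W7) again, and cancelling $d(x,y)>0$ to force $\mu=\lambda$. This buys self-containedness --- you never need the geodesic machinery of Lemma~\ref{geodesic-prop} --- and it makes the degenerate case $x=y$ explicit (via (W6)), which the paper's citation-style proof leaves implicit. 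One terminological quibble: your final step, $\mu=\lambda\Rightarrow z'=z$, is not the \emph{injectivity} of the parametrization $\mu\mapsto(1-\mu)x+\mu y$ but merely the fact that $W$ is a function, so equal parameters give equal points; injectivity (which (W2) does supply when $x\ne y$) is the converse implication and is not actually needed anywhere in your argument. This mislabeling is harmless, since the step you need holds trivially.
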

\begin{proof}
Apply (W7), Lemma~\ref{geodesic-prop}.\eqref{geodesic-prop-1} and the previous proposition.
\end{proof}

\subsection{$W$-hyperbolic spaces}

A $W$-hyperbolic space \cite{Koh05} is a $W$-space satisfying (W1)-(W4). 
One can easily see that  (W5)-(W7) also hold in a $W$-hyperbolic space. 
In particular, any $W$-hyperbolic space is a $W$-geodesic space.

$W$-hyperbolic spaces turn out to be a natural class of geodesic spaces for the study of nonlinear 
iterations.  Normed spaces are obvious examples of $W$-hyperbolic spaces, as one can  define  
$W(x,y,\lambda)=(1-\lambda)x+\lambda y$. Busemann spaces \cite{Bus48,Pap05} and CAT(0) 
spaces \cite{AleKapPet19,BriHae99} are also $W$-hyperbolic spaces:
\begin{enumerate}
\item by \cite[Proposition~2.6]{AriLeuLop14}, Busemann spaces are the uniquely geodesic 
$W$-hyperbolic spaces;
\item by \cite[p. 386-388]{Koh08},  CAT(0) spaces are the $W$-hyperbolic spaces $X$  satisfying 
$$
d^2\left(z,\frac12 x + \frac12 y\right)\leq \frac12d^2(z,x)+\frac12d^2(z,y)-\frac14d^2(x,y) \quad \text{for all~} x,y,z\in X.
$$
\end{enumerate} 
It is well-known that any CAT(0) space is a Busemann space.

\section{The Tikhonov-Mann and modified Halpern iterations}

Let $X$ be a $W$-space, $C\subseteq X$ a convex subset, and $T:C\to C$ be a 
nonexpansive mapping, i.e. for all $x,y\in C$
\[ d(Tx,Ty)\le d(x,y). \]
 This very general setting suffices for defining the iterations of interest for
us in this paper. Let $(\beta_n)$ and $(\lambda_n)$ be sequences in $[0,1]$ and $u\in C$.

The \emph{Tikhonov-Mann iteration} $(x_n)$ and the \textit{modified Halpern iteration}  $(y_n)$ are defined as follows:
\begin{align}
x_0 \in C, \qquad & 
x_{n + 1}  = (1 - \lambda_n) u_n + \lambda_n T u_n,\label{def-TKM-W} \\
y_0\in C, \qquad & y_{n + 1}  = (1 - \beta_{n+1}) u + \beta_{n+1} v_n, \label{def-MH-W}
\end{align}  
where 
\begin{align}
u_n = (1 - \beta_n)u + \beta_nx_n \quad \text{and} \quad v_n = (1 - \lambda_n) y_n + \lambda_n T y_n. \label{def-un-vn}
\end{align}
The ordinary Halpern iteration is the special case of $(y_n)$ with 
$\lambda_n=1$ for all $n\in\N$ and was introduced (in the case where 
$u:=0$) by Halpern in \cite{Hal67}.
\begin{fact}\label{changed-HM}
We use for the parameter sequences  from the definition of the modified Halpern iteration different 
notations than the ones from 
\cite{KimXu05,CunPan11,KohSch12}: we write $1 - \beta_{n+1}$ instead of $\beta_n$ and $1-\lambda_n$ instead of $\alpha_n$. 
\end{fact}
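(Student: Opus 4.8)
The statement is a purely notational remark, so the plan is not to prove a mathematical fact but to verify that the relabeling announced leaves the iteration unchanged. First I would write down the modified Halpern iteration in the notation of \cite{KimXu05,CunPan11,KohSch12}, namely
\[
y_{n+1} = \beta_n v + (1-\beta_n)\bigl(\alpha_n y_n + (1-\alpha_n)\,T y_n\bigr),
\]
with anchor point $v$ and scalar sequences $(\beta_n)$, $(\alpha_n)$ taking values in $[0,1]$.

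Next I would carry out the substitution recorded in the remark, replacing the old $\beta_n$ by $1-\beta_{n+1}$ and the old $\alpha_n$ by $1-\lambda_n$, where now $(\beta_n)$ and $(\lambda_n)$ are the sequences appearing in \eqref{def-MH-W} and \eqref{def-un-vn}. Using $1-(1-\beta_{n+1})=\beta_{n+1}$ and $1-(1-\lambda_n)=\lambda_n$, the right-hand side simplifies to
\[
(1-\beta_{n+1})\,v + \beta_{n+1}\bigl((1-\lambda_n) y_n + \lambda_n\,T y_n\bigr),
\]
and, after renaming the anchor $v$ as $u$ and abbreviating $v_n = (1-\lambda_n) y_n + \lambda_n T y_n$ exactly as in \eqref{def-un-vn}, this is precisely the recurrence \eqref{def-MH-W}. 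Thus the two presentations define one and the same sequence $(y_n)$.

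I expect no genuine obstacle, as the verification reduces to a one-line algebraic identity. The only point worth flagging is the index shift concealed in the substitution $\beta_n\mapsto 1-\beta_{n+1}$: the new anchor weight is shifted by one index so that $\beta_{n+1}$ multiplies the Mann step $v_n$ carrying the same subscript, which is exactly what makes the notation for $(y_n)$ line up with that of the Tikhonov-Mann iteration \eqref{def-TKM-W} in the reduction carried out later. This alignment is the actual motivation for the modified notation, even though it plays no role in the correctness of the remark.
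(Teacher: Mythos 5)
Your verification is correct and matches the paper's own (implicit) justification: the remark carries no proof in the paper, being a purely notational dictionary, and your direct substitution $\beta_n \mapsto 1-\beta_{n+1}$, $\alpha_n \mapsto 1-\lambda_n$ (with the index shift on the anchor weight correctly flagged as the point of the convention) recovers \eqref{def-MH-W} with \eqref{def-un-vn} exactly. Nothing further is needed.
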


We get from  \eqref{def-TKM-W}  the following inductive definition for $(u_n)$:
\begin{equation}\label{def-TKM-W-un-main}
\begin{aligned}
u_0 & = (1 - \beta_0) u + \beta_0 x_0 \in C, \\
u_{n + 1} & = (1 - \beta_{n + 1}) u + \beta_{n + 1}\big((1 - \lambda_n) u_n + \lambda_n T u_n\big).
\end{aligned}
\end{equation}

The following observation establishes the essential link between our iterations.

\begin{proposition}\label{main-link}
Assume that $y_0=(1-\beta_0) u + \beta_0 x_0$. Then for all $n\in \N$, 
\[u_n=y_n \text{~and~} x_{n+1}=v_n. \]
\end{proposition}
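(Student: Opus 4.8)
The plan is to prove both identities simultaneously by induction on $n\in\N$, since the two recurrences are coupled: the Tikhonov-Mann sequence $(x_n)$ feeds into $(u_n)$ via $u_n=(1-\beta_n)u+\beta_n x_n$, while the modified Halpern sequence $(y_n)$ feeds into $(v_n)$ via $v_n=(1-\lambda_n)y_n+\lambda_n Ty_n$, and the claimed correspondence is exactly $u_n=y_n$ together with $x_{n+1}=v_n$. The key point is that once one knows $u_n=y_n$ at stage $n$, the two schemes are literally applying the same abstract convex combinations to the same points.

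For the base case $n=0$, the identity $u_0=y_0$ is immediate: by \eqref{def-TKM-W-un-main} we have $u_0=(1-\beta_0)u+\beta_0 x_0$, and this is exactly the hypothesis $y_0=(1-\beta_0)u+\beta_0 x_0$ defining $y_0$. For the inductive step I would assume $u_n=y_n$ and derive both $x_{n+1}=v_n$ and $u_{n+1}=y_{n+1}$. First, from \eqref{def-TKM-W} we have $x_{n+1}=(1-\lambda_n)u_n+\lambda_n Tu_n$; substituting the inductive hypothesis $u_n=y_n$ gives $x_{n+1}=(1-\lambda_n)y_n+\lambda_n Ty_n$, which by \eqref{def-un-vn} is precisely $v_n$. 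This establishes $x_{n+1}=v_n$. Next, I would plug this into the two definitions of the successor terms: by \eqref{def-TKM-W-un-main}, $u_{n+1}=(1-\beta_{n+1})u+\beta_{n+1}\big((1-\lambda_n)u_n+\lambda_n Tu_n\big)=(1-\beta_{n+1})u+\beta_{n+1}x_{n+1}$, whereas by \eqref{def-MH-W}, $y_{n+1}=(1-\beta_{n+1})u+\beta_{n+1}v_n$. Since we have just shown $x_{n+1}=v_n$, the right-hand sides coincide as abstract convex combinations $W(u,\,\cdot\,,\beta_{n+1})$, so $u_{n+1}=y_{n+1}$, closing the induction.

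I do not expect any genuine obstacle here: the whole content is the bookkeeping observation that the index shift built into the modified Halpern scheme (writing $1-\beta_{n+1}$ and $1-\lambda_n$, as flagged in Remark~\ref{changed-HM}) is exactly what makes $u_n$ and $y_n$ track each other, with $x_{n+1}$ playing the role of the intermediate point $v_n$. The only care needed is to keep the two halves of the claim in the right order within the inductive step, establishing $x_{n+1}=v_n$ from the hypothesis $u_n=y_n$ before using it to equate $u_{n+1}$ and $y_{n+1}$; no properties of $W$ beyond the definitional equalities are invoked, and $T$ is applied to the same argument on both sides, so nonexpansiveness is not even needed for the identity itself.
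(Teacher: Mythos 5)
Your proof is correct and follows essentially the same route as the paper: induction anchored at $u_0=y_0$ (which is exactly the hypothesis on $y_0$), with the inductive step unfolding the definitions \eqref{def-MH-W}, \eqref{def-un-vn}, \eqref{def-TKM-W-un-main} to equate $u_{n+1}$ and $y_{n+1}$. The only cosmetic difference is that you fold $x_{n+1}=v_n$ into a simultaneous induction, whereas the paper proves $u_n=y_n$ by induction alone and then obtains $x_{n+1}=v_n$ as a one-line consequence; the underlying computation is identical.
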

\begin{proof} 
We prove the first equality by induction on $n$. The case $n=0$ holds by hypothesis. As for the inductive step $n \Rightarrow n+1$:
\begin{align*}
y_{n + 1}  &\stackrel{\eqref{def-MH-W}}{=} (1 - \beta_{n+1}) u + \beta_{n+1} v_n 
\stackrel{\eqref{def-un-vn}}{=} (1 - \beta_{n + 1}) u + \beta_{n + 1} \big((1 - \lambda_n) y_n + \lambda_n T y_n\big) \\
&\stackrel{\rm (IH)}{=} (1 - \beta_{n + 1}) u + \beta_{n + 1} \big((1 - \lambda_n) u_n + \lambda_n T u_n\big) \\ 
      & \stackrel{\eqref{def-TKM-W-un-main}}{=} u_{n + 1}.
\end{align*} 
  
Furthermore, $x_{n+1}=(1 - \lambda_n) u_n + \lambda_n T u_n =(1 - \lambda_n) y_n + \lambda_n T y_n=v_n$. 
\end{proof}

\subsection{(Quantitative) conditions on $(\beta_n)$ and $(\lambda_n)$}

The following conditions on the sequences $(\beta_n)$ and $(\lambda_n)$ were used in the 
study of the asymptotic regularity and strong convergence of the 
 Tikhonov-Mann  and modified Halpern iterations:
\begin{align*}
(\cmh1) \quad  & \sum\limits_{n=0}^\infty (1 - \beta_n) =\infty, & 
(\cmh1^*) \quad  &  \prod\limits_{n=1}^\infty \beta_{n+1}=0, \\
(\cmh2) \quad  & \sum_{n = 0}^{\infty} \vert \beta_{n + 1} - \beta_n \vert < \infty, & 
(\cmh3) \quad  & \sum_{n = 0}^{\infty} \vert \lambda_{n + 1} - \lambda_n \vert<\infty,\\
(\cmh4) \quad  &  \lim\limits_{n \to \infty} \beta_n=1, & (\cmh5) \quad  & \liminf\limits_{n\to\infty} \lambda_n > 0, \\
(\cmh6) \quad & \lim\limits_{n \to \infty} \lambda_n = 1, & 
(\cmh7) \quad & \sum\limits_{n=0}^\infty (1-\lambda_n) =\infty. 
\end{align*}

Kim and Xu \cite{KimXu05} proved  the strong convergence of the modified Halpern iteration in uniformly smooth Banach spaces
under the hypotheses $(\cmh1)-(\cmh4)$, $(\cmh6)$, and $(\cmh7)$. Cuntavenapit 
and Panyanak \cite{CunPan11} showed that strong convergence holds in CAT(0) spaces without assuming 
$(\cmh7)$; they remarked that $(\cmh7)$ can be eliminated also  in the case of  Kim and Xu's result. Note that all the remaining conditions permit the choice 
of $\lambda_n=1$ (for all $n$) by which the modified Halpern iteration becomes 
the ordinary Halpern iteration. \\ It is easy to see that the proof in \cite{CunPan11}
can be modified in such a way that instead of $(\cmh6)$ only the weaker 
condition $(\cmh5)$ is needed: replace the last inequality in \cite[(3.4)]{CunPan11} by (using $(\cmh5)$ for $\lambda_n=1-\alpha_n$)
\[ (1-\alpha_n)d(x_n,Tx_n)\le d(x_n,x_{n+1})+\beta_nd(u,y_n)\to 0, \ \mbox{as} 
\ n\to\infty.\] 
Bo\c t, Csetnek and Meier \cite{BotCseMei19} used $(\cmh1)-(\cmh5)$ 
to obtain the strong convergence  of a modified Mann iteration in Hilbert spaces. By the comment above, Proposition \ref{main-link} and 
Lemma \ref{alpha-rconv-dxnun} below, this result follows from 
\cite{CunPan11}. \\
As a consequence of the quantitative results obtained by Cheval and 
Leu\c{s}tean \cite{CheLeu22}, conditions 
$(\cmh1)-(\cmh5)$ suffice for proving the asymptotic regularity of the Tikhonov-Mann 
iteration in $W$-hyperbolic spaces.  We also consider condition $(\cmh1^*)$ (which - for strictly positive $\beta_n>0$ - is equivalent with $(\cmh1)$), 
since, as observed for the first time by the second author \cite{Koh11}, its quantitative version is 
useful in obtaining better rates of asymptotic regularity.

As we are interested in effective bounds on the asymptotic behaviour of our iterations, we consider 
quantitative versions of the above conditions (with the exception  of $(\cmh7)$, which, as pointed above, is superfluous):\\[1mm]

\begin{tabular}{lll}
$(\cmh1_q)$ & $\sum\limits_{n=2}^\infty (1 - \beta_n)$ diverges with rate of divergence $\roneq$; \\[1mm]
$(\cmh1^*_q)$ & $\prod\limits_{n=1}^\infty \beta_{n+1}=0$ with rate of convergence $\ronestarq$; \\[1mm]
$(\cmh2_q)$ & $\sum\limits_{n=0}^\infty  \vert \beta_{n + 1} - \beta_n \vert$ converges with Cauchy modulus $\rtwoq$; \\[1mm]
$(\cmh3_q)$ & $\sum\limits_{n=0}^\infty  \vert \lambda_{n + 1} - \lambda_n \vert$ converges with Cauchy modulus $\rthreeq$; \\[2mm]
$(\cmh4_q)$ & $\lim\limits_{n \to \infty} \beta_n=1$ with rate of convergence $\rfourq$; \\[2mm]
$(\cmh5_q)$ & $\Lambda\in\N^*$ and $N_{\Lambda}\in\N$ are such that $\lambda_n \geq \frac1\Lambda$ 
for all $n\geq N_{\Lambda}$; \\[2mm]
$(\cmh6_q)$ & $\lim\limits_{n \to \infty} \lambda_n=1$ with rate of convergence $\rsixq$.
\end{tabular}

We refer, for example, to \cite{CheLeu22} for the definitions of quantitative notions such as rate of convergence, 
Cauchy modulus, rate of divergence.  The indices in our conditions above are chosen in such a way that the respective moduli 
satisfy the conditions in both \cite{CheLeu22} and \cite{KohSch12}.

\section{Rates of asymptotic regularity}\label{section-rates-as-reg}

Let us recall that if $X$ is a metric space, $\emptyset \ne C\subseteq X$, and $T:C\to C$, then a sequence $(a_n)$ in $C$ is said to be 
\begin{enumerate}
\item asymptotically regular if $\lim\limits_{n\to \infty} d(a_n,a_{n+1})=0$; a rate of asymptotic regularity of $(a_n)$ is 
a rate of convergence of  $(d(a_n,a_{n+1}))$ towards $0$.
\item $T$-asymptotically regular if $\lim\limits_{n\to \infty} d(a_n,Ta_n)=0$;  a rate of $T$-asymptotic regularity of $(a_n)$ is 
a rate of convergence of  $(d(a_n,Ta_n ))$ towards $0$.
\end{enumerate}

In the sequel, we explore the relation between rates of ($T$-)asymptotic regularity of the Tikhonov-Mann iteration
$(x_n)$ and those of the modified Halpern iteration $(y_n)$. 

For the rest of the section, $(X,d,W)$ is a $W$-hyperbolic space, $C$ is a convex subset of $X$, 
and $T:C\to C$ is a nonexpansive mapping.
We assume that $T$ has fixed points, hence the set $Fix(T)$ of fixed points of $T$ is nonempty. 
If $p$ is a fixed point of $T$, define 
\begin{equation}
M_p = \max\{d(x_0,p), d(u,p)\}. \label{def-M-bound-xn}
\end{equation}
By \cite[Lemma~3.1.(ii)]{CheLeu22} and \cite[Proposition 3.2.(8)]{CheLeu22}, we have that 
\begin{align}
d(x_n,u_n) \leq 2M_p(1-\beta_n). \label{dxn-un-Mp}
\end{align}
Let $K\in\N^*$ be such that $K\geq M_p$.

\begin{lemma}\label{alpha-rconv-dxnun}
Assume that $(\cmh4_q)$ holds. Then $\lim\limits_{n \to \infty} d(x_n,u_n)=0$ with rate of convergence 
\begin{equation}
\alpha(k) = \rfourq\left({2K(k + 1) - 1}\right). \label{def-alpha}
\end{equation}
\end{lemma}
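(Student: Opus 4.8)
The plan is to prove this directly from the inequality~\eqref{dxn-un-Mp}, which already bounds $d(x_n,u_n)$ by $2M_p(1-\beta_n)$, combined with the fixed integer bound $K \geq M_p$. The only substantive ingredient is the quantitative condition $(\cmh4_q)$: a rate of convergence $\rfourq$ witnessing $\lim_{n\to\infty}\beta_n = 1$. Since the claimed rate in~\eqref{def-alpha} is an explicit composition of $\rfourq$ with an affine function of $k$, the entire proof reduces to unwinding the definition of ``rate of convergence'' and verifying the arithmetic.

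\begin{proof}
Fix $k\in\N$. Using $K\ge M_p$ together with~\eqref{dxn-un-Mp}, for every $n\in\N$ we have
\[
d(x_n,u_n) \le 2M_p(1-\beta_n) \le 2K(1-\beta_n).
\]
Thus, to guarantee $d(x_n,u_n)\le \frac{1}{k+1}$ it suffices to ensure
\[
1-\beta_n \le \frac{1}{2K(k+1)}.
\]
By $(\cmh4_q)$, $\rfourq$ is a rate of convergence of $(\beta_n)$ to $1$, so for all $n\ge \rfourq(m)$ we have $1-\beta_n \le \frac{1}{m+1}$. Setting $m = 2K(k+1)-1$ gives $\frac{1}{m+1} = \frac{1}{2K(k+1)}$, so for all $n\ge \rfourq(2K(k+1)-1) = \alpha(k)$ we obtain
\[
d(x_n,u_n) \le 2K(1-\beta_n) \le 2K\cdot\frac{1}{2K(k+1)} = \frac{1}{k+1}.
\]
Hence $\alpha$ is a rate of convergence of $(d(x_n,u_n))$ towards $0$.
\end{proof}

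**The only point requiring care** is the index bookkeeping connecting a rate of convergence in the usual ``$\frac{1}{k+1}$'' convention to the factor $2K$. One must choose the argument of $\rfourq$ so that the guaranteed bound on $1-\beta_n$, once multiplied by $2K$, lands exactly at $\frac{1}{k+1}$; this forces the argument $2K(k+1)-1$ appearing in~\eqref{def-alpha}. I do not anticipate any genuine obstacle here, since~\eqref{dxn-un-Mp} does all the analytic work and is quoted from~\cite{CheLeu22}; the lemma is essentially a translation of the quantitative hypothesis $(\cmh4_q)$ through a fixed multiplicative constant. The integrality of $K$ (as an element of $\N^*$) is what keeps $\alpha(k)$ a legitimate natural-number-valued rate.
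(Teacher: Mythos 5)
Your proof is correct and follows exactly the paper's argument: both use the bound $d(x_n,u_n)\le 2M_p(1-\beta_n)\le 2K(1-\beta_n)$ from \eqref{dxn-un-Mp} and then unwind the definition of the rate $\rfourq$ at the argument $2K(k+1)-1$ to land at $\frac{1}{k+1}$. The only difference is presentational -- you spell out the index bookkeeping that the paper compresses into a single chain of inequalities.
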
 
\begin{proof}
Let $n \geq \alpha(k)$. Then, by \eqref{dxn-un-Mp}, we get that 
\begin{align*}
d(x_n, u_n) &\leq 2M_p (1 - \beta_n) \leq 2K(1 - \beta_n)  \leq \frac{2K}{2K(k + 1)} = \frac{1}{k+1}.
\end{align*}
\end{proof}

\begin{proposition}\label{prop:asym-reg-MH-TM}
Assume that $(\cmh4_q)$ holds and let $\Phi:\N\to\N$. Define $\Phi':\N\to\N$ by 
\begin{equation}
\Phi'(k) := \max\left\{\alpha(3k+2), \Phi(3k+2) \right\},
\end{equation}
where $\alpha$ is given by \eqref{def-alpha}.
\begin{enumerate}
\item\label{xn-un-as-reg} If  $\Phi$ is a rate of ($T$-)asymptotic regularity of one of the sequences $(x_n)$, $(u_n)$, then 
$\Phi'$ is a rate of ($T$-)asymptotic regularity of the other one. 
\item\label{xn-yn-as-reg} Suppose, moreover, that $y_0 = (1 - \beta_0)u + \beta_0 x_0$. 
If one of the sequences $(x_n)$, $(y_n)$ is ($T$-)asymptotically regular with rate $\Phi$,
then  the other one is ($T$-)asymptotically regular with rate $\Phi'$. 
\end{enumerate}
\end{proposition}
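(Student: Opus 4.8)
The plan is to derive both parts from the single quantitative fact, supplied by Lemma~\ref{alpha-rconv-dxnun}, that $d(x_n,u_n)\to 0$ at the rate $\alpha$, combined with an $\varepsilon/3$-style splitting. Since the roles of $(x_n)$ and $(u_n)$ are symmetric, I would prove each implication in part (i) by one generic triangle-inequality argument and then read off the reverse direction by swapping letters.

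For ordinary asymptotic regularity, assume $\Phi$ is a rate for $(x_n)$ and take $n\ge\Phi'(k)$. The key step is
\[ d(u_n,u_{n+1})\le d(u_n,x_n)+d(x_n,x_{n+1})+d(x_{n+1},u_{n+1}). \]
Because $n\ge\alpha(3k+2)$ forces $n+1\ge\alpha(3k+2)$ as well, Lemma~\ref{alpha-rconv-dxnun} bounds the first and third summands by $\frac{1}{3k+3}=\frac{1}{3(k+1)}$, while $n\ge\Phi(3k+2)$ bounds the middle one by the same quantity; summing yields $d(u_n,u_{n+1})\le\frac{1}{k+1}$. Swapping the roles of $x$ and $u$ gives the reverse implication verbatim.

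For $T$-asymptotic regularity the only extra ingredient is the nonexpansiveness of $T$. I would instead write
\[ d(u_n,Tu_n)\le d(u_n,x_n)+d(x_n,Tx_n)+d(Tx_n,Tu_n)\le 2d(x_n,u_n)+d(x_n,Tx_n), \]
using $d(Tx_n,Tu_n)\le d(x_n,u_n)$. For $n\ge\Phi'(k)$ the assumption on $\Phi$ bounds $d(x_n,Tx_n)$ by $\frac{1}{3(k+1)}$ and Lemma~\ref{alpha-rconv-dxnun} bounds $2d(x_n,u_n)$ by $\frac{2}{3(k+1)}$, so the total is again at most $\frac{1}{k+1}$. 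This is precisely where the factor $3$ in the argument $3k+2$ is forced: the $T$-version redistributes the error as two copies of $d(x_n,u_n)$ plus one copy of $d(x_n,Tx_n)$, i.e.\ three pieces of size $\frac{1}{3(k+1)}$.

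Part (ii) then needs no new estimate. Under the hypothesis $y_0=(1-\beta_0)u+\beta_0 x_0$, Proposition~\ref{main-link} gives $u_n=y_n$ for every $n$, so $(u_n)$ and $(y_n)$ are literally the same sequence and the statement reduces to part (i) with $(u_n)$ renamed. I do not anticipate a genuine obstacle here: the whole argument is careful bookkeeping of an $\varepsilon/3$ split, and the only point requiring a moment's care is that the rate $\alpha$ must be applied at both indices $n$ and $n+1$, which is automatic since $n+1\ge n\ge\alpha(3k+2)$.
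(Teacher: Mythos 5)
Your proof is correct and follows essentially the same route as the paper's: the same $\varepsilon/3$ triangle-inequality splitting via $d(x_n,u_n)$, bounded by Lemma~\ref{alpha-rconv-dxnun}, with nonexpansiveness of $T$ absorbing the term $d(Tx_n,Tu_n)$, symmetry handling the converse direction, and Proposition~\ref{main-link} reducing part (ii) to part (i). No gaps; the observation that $n+1\ge\alpha(3k+2)$ is exactly the bookkeeping the paper also relies on implicitly.
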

\begin{proof}
\begin{enumerate}
\item Let $k \in \N$ and $n \geq \Phi'(k)$. 
Assume first that $\Phi$ is a rate of $T$-asymptotic regularity of $(u_n)$. 
We get that 
\begin{align*}
 d(x_n, T x_n) & \leq d(x_n, u_n) + d(u_n, T u_n) + d(Tu_n, Tx_n) \\
 & \leq 2d(x_n,u_n) + d(u_n, Tu_n) \quad [\text{since~} T \text{~is  nonexpansive}]\\ 
 & \leq 2d(x_n,u_n) + \frac{1}{3(k + 1)} \quad [\text{since~} n\geq \Phi(3k+2)] \\
 &\leq \frac{2}{3(k + 1)} + \frac{1}{3(k + 1)}=\frac1{k+1},
\end{align*} 
as $n\geq \alpha(3k+2)$, so we can apply  Lemma~\ref{alpha-rconv-dxnun}.

Assume now that $\Phi$ is a rate of asymptotic regularity of $(u_n)$.  Then
\begin{align*}
d(x_n,x_{n+1}) & \leq d(x_n,u_n)+d(u_n,u_{n+1})+d(u_{n+1},x_{n+1}) \\
& \leq d(x_n,u_n) + d(u_{n+1},x_{n+1}) + \frac{1}{3(k + 1)}\\
&\leq \frac{2}{3(k + 1)} + \frac{1}{3(k + 1)}=\frac1{k+1}.
\end{align*} 

The proof for the case when $\Phi$ is a rate of ($T$-)asymptotic regularity of $(x_n)$ follows by symmetry.

\item We have, by Proposition~\ref{main-link}, that $y_n=u_n$ for all $n\in\N$.  Apply \eqref{xn-un-as-reg}.
\end{enumerate}
\end{proof}

It follows  that if the starting points $x_0,y_0\in C$ satisfy $y_0 = (1 - \beta_0)u + \beta_0 x_0$, then
$(x_n)$ is ($T$-)asymptotically regular if and only if  $(y_n)$ is ($T$-)asymptotically regular.

\subsection{On rates of ($T$-)asymptotic regularity of the modified Halpern iteration}\label{rates-HM-subsection}

In \cite[Propositions~6.1,~6.2]{KohSch12}, Schade and the second author 
computed uniform rates 
of ($T$-)asymptotic regularity of the modified Halpern iteration in 
$W$-hyperbolic spaces. The hypotheses on the sequences 
$(\lambda_n)$,  $(\beta_n)$ used in \cite{KohSch12}
were $(\cmh1_q)$ (or - for strictly positive $\beta_n>0$ - equivalently, $(\cmh1^*_q)$), $(\cmh2_q)$ - $(\cmh4_q)$ 
and $(\cmh6_q)$. We improve  
these results by showing that the hypothesis $(\cmh6_q)$ can be weakened to $(\cmh5_q)$. 

Let $(y_n)$ be the modified Halpern iteration, given by \eqref{def-TKM-W}. Let $M\in\N^*$ be such that 
\begin{equation}
M\geq 4 \max\{d(u,p),d (y_0,p)\}
\end{equation}
for some $p\in Fix(T)$. 

The following lemma collects some properties of $(y_n)$ that will be useful in the sequel.

\begin{lemma} \cite{CunPan11,KohSch12} 
For all $n\geq 1$, 
\begin{align}
d(y_n,u) & \le M,  d(Ty_n,u)  \le M  \text{~and~}  d(y_n,y_{n+1}) \le M,\label{prop-5-2-6-8-9}\\
d(y_n,Ty_n) &  \leq d(y_n,y_{n+1})+(1-\beta_{n+1})d(u,Ty_n)+\beta_{n+1}(1-\lambda_n)d(y_n,Ty_n), \label{prop-5-1-6}\\
d(y_{n+1},y_n) & \le   \beta_{n+1}\big(d(y_n,y_{n-1}) \! + \! |\lambda_n \! - \! \lambda_{n-1}|d(y_{n-1},Ty_{n-1})\big)\!+\!|\beta_{n+1} \! - \! \beta_n|c_n, \label{prop-5-1-3}
\end{align}
where $c_n=(1-\lambda_{n-1})d(u,y_{n-1})+ \lambda_{n-1}d(u,Ty_{n-1})$.
\end{lemma}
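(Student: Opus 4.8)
The plan is to establish the three groups of estimates in turn, each a consequence of the $W$-hyperbolic axioms (W1)--(W4) (and the derived (W5)--(W7)) together with the nonexpansiveness of $T$. The recursive inequality \eqref{prop-5-1-3} is the substantive one; the others are short applications of the axioms.

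First I would prove the uniform bounds in \eqref{prop-5-2-6-8-9} by showing, by induction on $n$, that $d(y_n,p)\le R$ where $R:=\max\{d(u,p),d(y_0,p)\}$. In the inductive step, applying (W1) to $y_{n+1}=(1-\beta_{n+1})u+\beta_{n+1}v_n$ gives $d(y_{n+1},p)\le(1-\beta_{n+1})d(u,p)+\beta_{n+1}d(v_n,p)$, and a second application of (W1) to $v_n=(1-\lambda_n)y_n+\lambda_nTy_n$ together with $d(Ty_n,p)=d(Ty_n,Tp)\le d(y_n,p)$ yields $d(v_n,p)\le d(y_n,p)\le R$; hence $d(y_{n+1},p)\le R$. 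Since $M\ge 4R$, the three bounds follow at once from the triangle inequality: $d(y_n,u)\le d(y_n,p)+d(p,u)\le 2R\le M$, $d(Ty_n,u)\le d(Ty_n,p)+d(p,u)\le 2R\le M$, and $d(y_n,y_{n+1})\le d(y_n,p)+d(p,y_{n+1})\le 2R\le M$.

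For \eqref{prop-5-1-6} I would split $d(y_n,Ty_n)\le d(y_n,y_{n+1})+d(y_{n+1},Ty_n)$ and estimate the second summand. Applying (W1) with basepoint $Ty_n$ to $y_{n+1}=(1-\beta_{n+1})u+\beta_{n+1}v_n$ gives $d(y_{n+1},Ty_n)\le(1-\beta_{n+1})d(u,Ty_n)+\beta_{n+1}d(v_n,Ty_n)$, and (W7) applied to the convex combination $v_n=(1-\lambda_n)y_n+\lambda_nTy_n$ yields $d(v_n,Ty_n)=(1-\lambda_n)d(y_n,Ty_n)$. Substituting reproduces exactly the claimed inequality.

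The main work, and the chief obstacle, is \eqref{prop-5-1-3}: here $y_{n+1}$ and $y_n$ are convex combinations in which \emph{both} the outer weights ($\beta_{n+1}$ vs.\ $\beta_n$) and the combined points ($v_n$ vs.\ $v_{n-1}$) differ, and inside, $v_n$ and $v_{n-1}$ differ in both the weight ($\lambda_n$ vs.\ $\lambda_{n-1}$) and the underlying points. The standard device is to interpolate through auxiliary points so that only one parameter changes at each comparison. I would first insert $w:=(1-\beta_{n+1})u+\beta_{n+1}v_{n-1}$ and write $d(y_{n+1},y_n)\le d(y_{n+1},w)+d(w,y_n)$: (W4) gives $d(y_{n+1},w)\le\beta_{n+1}d(v_n,v_{n-1})$, while (W2) gives $d(w,y_n)=|\beta_{n+1}-\beta_n|\,d(u,v_{n-1})$, and one application of (W1) bounds $d(u,v_{n-1})\le c_n$. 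It then remains to control $d(v_n,v_{n-1})$ by the same idea: inserting $\tilde v:=(1-\lambda_n)y_{n-1}+\lambda_nTy_{n-1}$, (W4) together with nonexpansiveness of $T$ gives $d(v_n,\tilde v)\le(1-\lambda_n)d(y_n,y_{n-1})+\lambda_nd(Ty_n,Ty_{n-1})\le d(y_n,y_{n-1})$, and (W2) gives $d(\tilde v,v_{n-1})=|\lambda_n-\lambda_{n-1}|\,d(y_{n-1},Ty_{n-1})$. Assembling these estimates produces precisely \eqref{prop-5-1-3}.
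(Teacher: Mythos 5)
Your proof is correct, but it differs in character from what the paper does: the paper gives no mathematical argument for this lemma at all, instead citing \cite{KohSch12} (which in turn follows the CAT(0) treatment of \cite{CunPan11}) and recording only the notational dictionary by which the scalars $\beta_n,\alpha_n$ of those papers become $1-\beta_{n+1}$, $1-\lambda_n$ here. What you have done is reconstruct, in the present paper's notation, the proofs that live in those references, and every step checks out: the induction $d(y_n,p)\le\max\{d(u,p),d(y_0,p)\}$ via two applications of (W1) and nonexpansiveness yields \eqref{prop-5-2-6-8-9} (with room to spare, since $M\ge 4\max\{d(u,p),d(y_0,p)\}$ while your argument only needs a factor $2$); the split $d(y_n,Ty_n)\le d(y_n,y_{n+1})+d(y_{n+1},Ty_n)$ followed by (W1) and (W7) gives exactly \eqref{prop-5-1-6}; and the interpolation through the auxiliary points $w=(1-\beta_{n+1})u+\beta_{n+1}v_{n-1}$ and $\tilde v=(1-\lambda_n)y_{n-1}+\lambda_n Ty_{n-1}$ --- changing one datum at a time, with (W4) handling equal weights and (W2) equal endpoints, plus nonexpansiveness --- assembles into \eqref{prop-5-1-3}. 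This one-parameter-at-a-time interpolation is precisely the standard device used in the cited sources, so the underlying mathematics coincides; what your self-contained write-up buys is an explicit verification that only the $W$-hyperbolic axioms (W1)--(W4) and their consequences are needed (no CAT(0) geometry and no linear structure), which is exactly the generality the paper claims but delegates to \cite{KohSch12}.
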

\begin{proof} These properties are  proved in  \cite{KohSch12} following \cite{CunPan11} which treats
           the case of CAT(0) spaces. As we pointed out in  Remark~\ref{changed-HM}, the sequence $(x_n)$ and 
the scalars $\beta_n,\alpha_n$ in \cite{KohSch12} correspond to $(y_n)$ and $1-\beta_{n+1},1-\lambda_n$ in our current paper. Then
\eqref{prop-5-2-6-8-9} is \cite[Lemma 5.2(6),(8),(9)]{KohSch12},  \eqref{prop-5-1-6} is \cite[Lemma 5.1(6)]{KohSch12},   
and  \eqref{prop-5-1-3} is \cite[Proof of Lemma 5.1(3), last line on p.10]{KohSch12}.
\end{proof}

\begin{proposition}\label{HM-as-reg-new}
Assume that $(\cmh2_q)$, $(\cmh3_q)$ hold. Define \begin{equation} 
\gamma(k) = \max\left\{\rtwoq(8M(k + 1) - 1), \rthreeq(4M(k + 1) - 1) \right\}. \label{HM-def-gamma}
\end{equation}
\begin{enumerate}
\item\label{HM-as-reg-new-1} If $(\cmh1_q)$ holds, then $(y_n)$ is asymptotically regular with rate 
\begin{align}
\Sigma(k) &= \roneq\big(\gamma(k) + 
\left\lceil\ln(M(k + 1))\right\rceil + 1 \big) + 1. \label{HM-rate-ar-C1}
\end{align}
\item\label{HM-as-reg-new-1star} Suppose that $(\cmh1^*_q)$ holds, and $\psi: \N \to \N^*$ satisfies 
\begin{equation} 
\frac{1}{\psi(k)} \leq \prod\limits_{n = 0}^{\gamma(k)} \beta_{n + 1}. \label{HM-def-psi0}
\end{equation}
Then $(y_n)$ is asymptotically regular with rate
\begin{align}
\Sigma^*(k) &= \ronestarq\left(M\psi(k)(k + 1) - 1\right) + 1.
\label{HM-rate-ar-C1*} 
\end{align}
\end{enumerate}
\end{proposition}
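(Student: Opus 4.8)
The plan is to read \eqref{prop-5-1-3} as an inhomogeneous linear recurrence in the distances $d(y_{n+1},y_n)$ and then to unfold it. First I would record, using the bounds from \eqref{prop-5-2-6-8-9} together with the factor $4$ in the definition of $M$ (a quick induction via (W1) and nonexpansiveness gives $d(y_n,p)\le M/4$, which provides the slack $d(y_{n-1},Ty_{n-1}),\,c_n,\,d(y_n,y_{n+1})\le M/2$ needed below), that \eqref{prop-5-1-3} yields, for all $n\ge 1$,
\begin{equation*}
d(y_{n+1},y_n)\le \beta_{n+1}\,d(y_n,y_{n-1})+\frac{M}{2}\bigl(|\lambda_n-\lambda_{n-1}|+|\beta_{n+1}-\beta_n|\bigr),
\end{equation*}
after bounding $\beta_{n+1}\le 1$ in the error coefficients. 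Unfolding this from the base index $\gamma(k)$ gives
\begin{equation*}
d(y_{n+1},y_n)\le \Bigl(\prod_{j=\gamma(k)+1}^{n+1}\beta_j\Bigr)d(y_{\gamma(k)},y_{\gamma(k)-1})+\frac{M}{2}\sum_{m=\gamma(k)}^{n}\Bigl(\prod_{j=m+2}^{n+1}\beta_j\Bigr)\bigl(|\lambda_m-\lambda_{m-1}|+|\beta_{m+1}-\beta_m|\bigr),
\end{equation*}
which splits the problem into an error sum and a memory term.

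For the error sum I would bound each inner product by $1$ and pass to the full tails $\sum_{m\ge\gamma(k)}$. By $(\cmh3_q)$ and the choice $\gamma(k)\ge\rthreeq(4M(k+1)-1)$ in \eqref{HM-def-gamma} the $\lambda$-tail is $\le \tfrac{1}{4M(k+1)}$, and by $(\cmh2_q)$ and $\gamma(k)\ge\rtwoq(8M(k+1)-1)$ the $\beta$-tail is $\le \tfrac{1}{8M(k+1)}$; multiplying by $\tfrac{M}{2}$ shows the error sum sits well below $\tfrac{1}{2(k+1)}$. Thus it remains only to push the memory term below $\tfrac{1}{2(k+1)}$, and since $d(y_{\gamma(k)},y_{\gamma(k)-1})\le M/2$ this amounts to forcing the partial product $\prod_{j=\gamma(k)+1}^{n+1}\beta_j$ below $\tfrac{1}{M(k+1)}$.

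Making this partial product small is the crux, and the two cases handle it differently. In case \eqref{HM-as-reg-new-1} I would use $\prod_{j=\gamma(k)+1}^{n+1}\beta_j\le\exp\bigl(-\sum_{j=\gamma(k)+1}^{n+1}(1-\beta_j)\bigr)$ and the rate of divergence $\roneq$ of $(\cmh1_q)$; because the head $\sum_{j=2}^{\gamma(k)}(1-\beta_j)$ can absorb at most $\gamma(k)$ units of mass (each summand being $\le 1$), I would evaluate $\roneq$ at $\gamma(k)+\lceil\ln(M(k+1))\rceil+1$ so that the tail sum $\sum_{j=\gamma(k)+1}^{n+1}(1-\beta_j)$ exceeds $\ln(M(k+1))$ and the product drops below $\tfrac{1}{M(k+1)}$; this is exactly the argument of $\roneq$ in \eqref{HM-rate-ar-C1}. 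In case \eqref{HM-as-reg-new-1star} I would instead write the partial product as the full product divided by the head product and invoke the lower bound \eqref{HM-def-psi0}, $\prod_{n=0}^{\gamma(k)}\beta_{n+1}\ge\tfrac{1}{\psi(k)}$, to obtain $\prod_{j=\gamma(k)+1}^{n+1}\beta_j\le\psi(k)\prod_{j=2}^{n+1}\beta_j$; applying the rate of convergence $\ronestarq$ of $(\cmh1^*_q)$ at $\ell=M\psi(k)(k+1)-1$ makes $\prod_{j=2}^{n+1}\beta_j\le\tfrac{1}{M\psi(k)(k+1)}$, so the partial product is again $\le\tfrac{1}{M(k+1)}$, matching \eqref{HM-rate-ar-C1*}. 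In both cases the memory term and the error sum add up to at most $\tfrac{1}{k+1}$ once $n+1$ exceeds the stated threshold, and the additive $+1$ in $\Sigma$ and $\Sigma^*$ absorbs both the index shift between $d(y_{n+1},y_n)$ and the asymptotic-regularity statement for $(y_n)$ and the requirement $n\ge\gamma(k)$ needed to start the unfolding.

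The main obstacle is precisely the memory term: the inhomogeneous recurrence decays only if $\prod_{j=\gamma(k)+1}^{n+1}\beta_j\to 0$, and the global moduli of $(\cmh1_q)$ and $(\cmh1^*_q)$ cannot be applied verbatim because the unfolding begins at the data-dependent index $\gamma(k)$ rather than at a fixed start. The device that resolves this is \emph{additive} in case \eqref{HM-as-reg-new-1} --- paying $\gamma(k)$ extra units in the argument of $\roneq$ to cover the mass that the head of the divergent series may already have consumed --- and \emph{multiplicative} in case \eqref{HM-as-reg-new-1star} --- comparing the partial product to the full product through the auxiliary modulus $\psi$ of \eqref{HM-def-psi0}. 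By contrast, the error-sum estimate is routine once one notices that the constants $4M$ and $8M$ in \eqref{HM-def-gamma} are calibrated to the coefficient $\tfrac{M}{2}$ produced by the bounds of \eqref{prop-5-2-6-8-9}.
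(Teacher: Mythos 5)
Your proposal is correct in substance, but note how it relates to the paper: the paper does not prove this proposition directly at all --- its ``proof'' is a citation of \cite[Propositions~6.1,~6.2]{KohSch12} (which in turn rest on \cite{KohLeu}), together with a dictionary translating notation and the observation that only $(\cmh1_q)$/$(\cmh1^*_q)$, $(\cmh2_q)$, $(\cmh3_q)$ are actually used there. What you wrote is essentially a self-contained reconstruction of the argument behind those cited propositions: unfolding \eqref{prop-5-1-3} as an inhomogeneous recurrence, splitting into a memory term and an error sum, and killing the memory term either additively via $\roneq$ (the $\gamma(k)$ offset absorbing the head of the divergent series) or multiplicatively via $\ronestarq$ and $\psi$ --- this is exactly the mechanism of the quantitative lemmas in \cite{KohLeu,KohSch12}. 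Your version buys self-containedness and independently confirms the paper's claim that the remaining hypotheses (in particular $(\cmh6_q)$) are not needed; the paper's version buys brevity and exact agreement of the constants with the literature. Your calibration remark is also genuinely needed: with only the crude bounds of \eqref{prop-5-2-6-8-9} (i.e.\ $d(y_{n-1},Ty_{n-1})\le 2M$, $c_n\le M$, $d(y_{\gamma(k)},y_{\gamma(k)-1})\le M$) the arithmetic gives roughly $\frac{13}{8(k+1)}$, so the sharpened bounds $d(y_n,p)\le M/4$, hence $c_n,\, d(y_{n-1},Ty_{n-1}),\, d(y_n,y_{n+1})\le M/2$, coming from the factor $4$ in the definition of $M$, are what make the constants $4M$, $8M$, $M$ in \eqref{HM-def-gamma}--\eqref{HM-rate-ar-C1*} work.

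Two bookkeeping points should be tightened, neither of which changes the stated rates. First, your error sum $\sum_{m=\gamma(k)}^{n}|\lambda_m-\lambda_{m-1}|$ equals $\sum_{i=\gamma(k)-1}^{n-1}|\lambda_{i+1}-\lambda_i|$, whose first index lies \emph{below} $\gamma(k)\ge\rthreeq(4M(k+1)-1)$, so with the usual convention for a Cauchy modulus (tails \emph{after} the modulus index are small) this sum is not covered; the fix is to start the unfolding at base $\gamma(k)+2$ rather than $\gamma(k)$. The head of $\sum_{j\geq 2}(1-\beta_j)$ then absorbs at most $\gamma(k)+1$ units, the tail still exceeds $\lceil\ln(M(k+1))\rceil$, and your slack ($\frac{3}{16}+\frac12<1$) is untouched. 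Second, the unfolding uses \eqref{prop-5-1-3} down to the base, so the base must be $\ge 1$, and one must check that the claimed rates exceed the base: in case (i) this follows from $\roneq(m)\ge m+1$ (the summands are $\le 1$), and in case (ii) from the fact that the partial products are nonincreasing with $\prod_{j=1}^{\gamma(k)+1}\beta_j\ge 1/\psi(k)>1/(M\psi(k)(k+1))$ whenever $M(k+1)>1$; the excluded case $M=1$, $k=0$ forces $u=y_0=p$, where $(y_n)$ is constant and the statement is trivial.
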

\begin{proof}
This result is proven - in a different notation - in \cite[Propositions~6.1,~6.2]{KohSch12} 
observing that only the assumptions stated above are used there, where
\cite{KohSch12} in turn is based on \cite{KohLeu}. More 
specifically: 
\begin{enumerate}
\item replace the notations used for the modified Halpern iteration in 
\cite{KohSch12} with the ones from this paper (see Remark~\ref{changed-HM}).
\item use $\frac1{k+1}$ instead of $\varepsilon$.
\item replace $M_2$ with $M$, $\psi_\alpha$ with $\rthreeq$, $\psi_\beta$ with $\rtwoq$, and $\theta_\beta$ with $\roneq$ in (i) and with 
$\ronestarq$ 
in (ii), $D$ with $\frac{1}{\psi(k)}$ in (ii), with the corresponding changes in the parameters, 
due to the definitions, used in this paper, of the rates as mappings $\N\to\N$. 
\end{enumerate}
We get that  $\Sigma$ from \eqref{HM-as-reg-new-1} is $\tilde{\Phi}$ from \cite[Proposition~6.1]{KohSch12} with $\psi_\beta\left(\frac{\varepsilon}{8M_2}\right)$ replaced by  $\rtwoq(8M(k+1)-1)$, 
$\psi_\alpha\left(\frac{\varepsilon}{4M_2}\right)$ replaced by  $\rthreeq(4M(k+1)-1)$, 
and $\left\lceil \frac{M_2}{\varepsilon}\right\rceil$ replaced by $M(k+1)$. Furthermore, $\Sigma^*$ from \eqref{HM-as-reg-new-1star} is 
 $\tilde{\Phi}$ from \cite[Proposition~6.2]{KohSch12} with $\theta_\beta\left(\frac{D\varepsilon}{M_2}\right)$ replaced by $\ronestarq\left(M\psi(k)(k + 1) - 1\right)$. 
\end{proof}

\begin{proposition}\label{HM-as-reg-to-T-as-reg}
Assume that $(\cmh4_q)$ and $(\cmh5_q)$ hold. If $\Sigma: \N \to \N$ is a rate of asymptotic regularity of $(y_n)$,
then 
\begin{align}\label{def-wSigma}
\widehat{\Sigma}(k) & = \max\{N_\Lambda, \Sigma(2\Lambda (k + 1) - 1), \rfourq(2M\Lambda (k + 1) - 1)\}
\end{align}
is a rate of $T$-asymptotic regularity of $(y_n)$. 
\end{proposition}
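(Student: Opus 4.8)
The plan is to obtain the $T$-asymptotic regularity directly from the asymptotic regularity by exploiting the single inequality \eqref{prop-5-1-6}, which already expresses $d(y_n,Ty_n)$ in terms of $d(y_n,y_{n+1})$ together with two correction terms. First I would rearrange \eqref{prop-5-1-6} so as to collect the two occurrences of $d(y_n,Ty_n)$:
\[
\big(1 - \beta_{n+1}(1-\lambda_n)\big)\, d(y_n, Ty_n) \le d(y_n, y_{n+1}) + (1-\beta_{n+1})\, d(u, Ty_n).
\]

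The decisive observation is that the coefficient on the left is bounded below in terms of $\Lambda$. Since $\beta_{n+1} \le 1$ we have $\beta_{n+1}(1-\lambda_n) \le 1-\lambda_n$, hence $1 - \beta_{n+1}(1-\lambda_n) \ge \lambda_n$, and by $(\cmh5_q)$ this is $\ge \frac{1}{\Lambda} > 0$ as soon as $n \ge N_\Lambda$. For such $n$ I may divide and then invoke the bound $d(u,Ty_n) \le M$ from \eqref{prop-5-2-6-8-9} (valid for $n \ge 1$) to obtain
\[
d(y_n, Ty_n) \le \Lambda\big( d(y_n, y_{n+1}) + (1-\beta_{n+1})M\big) = \Lambda\, d(y_n, y_{n+1}) + \Lambda M (1-\beta_{n+1}).
\]
This cleanly separates the contribution governed by the rate $\Sigma$ from the one governed by $(\cmh4_q)$.

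It then remains to force each of the two summands to be at most $\frac{1}{2(k+1)}$. For the first, since $\Sigma$ is a rate of asymptotic regularity of $(y_n)$, taking $n \ge \Sigma(2\Lambda(k+1)-1)$ yields $d(y_n,y_{n+1}) \le \frac{1}{2\Lambda(k+1)}$, so $\Lambda\, d(y_n,y_{n+1}) \le \frac{1}{2(k+1)}$. For the second, $(\cmh4_q)$ with rate $\rfourq$ gives $1 - \beta_{n+1} \le \frac{1}{2M\Lambda(k+1)}$ whenever $n+1 \ge \rfourq(2M\Lambda(k+1)-1)$, which holds once $n \ge \rfourq(2M\Lambda(k+1)-1)$ by monotonicity of the constraint in the index; then $\Lambda M(1-\beta_{n+1}) \le \frac{1}{2(k+1)}$. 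Thus for $n \ge \widehat{\Sigma}(k)$ all three requirements ($n \ge N_\Lambda$ for the coefficient bound, and the two rate constraints) hold simultaneously, and summing gives $d(y_n,Ty_n) \le \frac{1}{k+1}$, as claimed.

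I expect the only genuinely delicate step to be the lower bound on the coefficient $1 - \beta_{n+1}(1-\lambda_n)$: it is precisely what turns the self-referential appearance of $d(y_n,Ty_n)$ on the right of \eqref{prop-5-1-6} into a usable one-sided estimate, and it is exactly here that the weaker hypothesis $(\cmh5_q)$ suffices in place of $(\cmh6_q)$. The rest is routine index bookkeeping; the two minor points to keep track of are that the bound $d(u,Ty_n)\le M$ needs $n \ge 1$ (harmless, since the moduli are evaluated at positive arguments because $M,\Lambda \ge 1$) and that $\rfourq$ must be read off at $n+1$ rather than $n$.
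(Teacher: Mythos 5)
Your proposal is correct and follows essentially the same route as the paper's proof: both isolate $d(y_n,Ty_n)$ from inequality \eqref{prop-5-1-6}, use $\beta_{n+1}(1-\lambda_n)\le 1-\lambda_n$ together with $(\cmh5_q)$ to get the coefficient bound $\lambda_n\ge\frac1\Lambda$ for $n\ge N_\Lambda$, bound $d(u,Ty_n)\le M$ via \eqref{prop-5-2-6-8-9}, and then split $\frac1{k+1}$ into two halves controlled by $\Sigma(2\Lambda(k+1)-1)$ and $\rfourq(2M\Lambda(k+1)-1)$ respectively. The only cosmetic difference is that you collect both occurrences of $d(y_n,Ty_n)$ before estimating the coefficient, whereas the paper first estimates $\beta_{n+1}(1-\lambda_n)\le 1-\lambda_n$ and then moves the term across; these are the same computation.
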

\begin{proof} 
We get that for all $n\in\N$, 
\begin{align*}
d(y_n,Ty_n) &\stackrel{\eqref{prop-5-1-6}}{\leq}  d(y_n,y_{n+1})+(1-\beta_{n+1})d(u,Ty_n)+\beta_{n+1}(1-\lambda_n)d(y_n,Ty_n) \\
& \stackrel{\eqref{prop-5-2-6-8-9}}{\leq} d(y_n,y_{n+1})+M(1-\beta_{n+1})+(1-\lambda_n)d(y_n,Ty_n).
\end{align*}
After moving $(1-\lambda_n)d(y_n,Ty_n)$ to the left-hand side, we get that, for all $n\in\N$, 
\begin{align}
\lambda_n d(y_n,Ty_n) & \leq d(y_n,y_{n+1})+M(1-\beta_{n+1}).  \label{as-reg-HM-ineq-1}
\end{align}
Let now $n\geq \widehat{\Sigma}(k)$. Since $n\geq N_\Lambda$, we can apply $(\cmh5_q)$ to obtain that
$\lambda_n \geq \frac1\Lambda$. It follows from \eqref{as-reg-HM-ineq-1} that 
\begin{align*}
\frac1\Lambda d(y_n,Ty_n) & \leq d(y_n,y_{n+1})+M(1-\beta_{n+1}),  
\end{align*}
hence
\begin{align}
d(y_n,Ty_n) & \leq \Lambda d(y_n,y_{n+1})+M\Lambda(1-\beta_{n+1}).  \label{as-reg-HM-ineq-2}
\end{align}
As $n\geq \Sigma(2\Lambda (k + 1) - 1)$, we have that 
\begin{align}
d(y_n,y_{n+1}) & \leq \frac1{2\Lambda (k + 1)}. \label{as-reg-HM-ineq-3}
\end{align}
Since $n\geq \rfourq(2M\Lambda (k + 1) - 1)$, we get that 
\begin{align}
1-\beta_{n+1} &\leq \frac1{2M\Lambda (k + 1)}. \label{as-reg-HM-ineq-4}
\end{align}

Apply \eqref{as-reg-HM-ineq-2}, \eqref{as-reg-HM-ineq-3} and \eqref{as-reg-HM-ineq-4} to conclude that 
\begin{align*}
d(y_n,Ty_n) & \leq \frac1{2(k + 1)} + \frac1{2(k + 1)} =\frac1{k+1}.
\end{align*}
\end{proof}

Thus, as an application  of Propositions~\ref{HM-as-reg-new}, \ref{HM-as-reg-to-T-as-reg}, one computes also rates of 
$T$-asymptotic regularity of the modified Halpern iteration.

\mbox{}

One can easily see that particularizing the rates obtained in Propositions~\ref{HM-as-reg-new}.(ii) 
and \ref{HM-as-reg-to-T-as-reg} to the scalars 
$\lambda_n=\lambda \in (0,1]$ and $\beta_n = 1-\frac1{n+1}$ yields to 
quadratic rates of ($T$-)asymptotic regularity. In \cite{SabSht} a linear rate of 
convergence is obtained for some other Halpern-type iteration in the 
normed case for $\beta_n = 1-\frac{2}{n}.$ We now show that we also obtain 
this for the modified Halpern iteration in W-hyperbolic spaces using 
 \cite[Lemma 3]{SabSht}:
 
\begin{lemma}[\cite{SabSht}]\label{lemma3SabSht} Let $L>0$ and 
$(a_n)$ be a sequence of non-negative real numbers with $a_1\le L$ 
such that for 
$b_n=\min\{ 2/n,1\}$ we have for all $n\ge 1$,
\[ a_{n+1}\le (1-b_{n+1})a_n+(b_n-b_{n+1})c_n, \]
where $(c_n)$ is a sequence of reals such that $c_n\le L$ for all $n\geq 1$.

Then $a_n\le 2L/n$ for all $n\ge 1$. 
\end{lemma}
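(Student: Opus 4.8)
The plan is to prove the bound $a_n\le 2L/n$ by induction on $n$, exploiting the explicit shape of the weights $b_n$. The first thing I would record is that the cap in $b_n=\min\{2/n,1\}$ is active only at the very start: one has $b_1=b_2=1$, whereas $b_n=2/n$ for every $n\ge 2$ (note $2/2=1$, so the formula $b_n=2/n$ is already correct at $n=2$). In particular $(b_n)_{n\ge 2}$ is nonincreasing, a fact I will need below.

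For the base cases, $n=1$ is immediate since $a_1\le L\le 2L=2L/1$. For $n=2$ I would instantiate the hypothesis at $n=1$: as $b_1=b_2=1$, both coefficients $1-b_2$ and $b_1-b_2$ vanish, so the recurrence gives $a_2\le 0$; together with $a_2\ge 0$ this yields $a_2=0\le L=2L/2$. This is the anchor for the main induction.

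For the inductive step I fix $n\ge 2$, assume $a_n\le 2L/n$, and compute with $b_n=2/n$, $b_{n+1}=2/(n+1)$, so that $1-b_{n+1}=(n-1)/(n+1)$ and $b_n-b_{n+1}=2/(n(n+1))$. Both coefficients are nonnegative, which lets me insert $a_n\le 2L/n$ and $c_n\le L$ into the recurrence; the resulting expression telescopes,
\[ a_{n+1}\le \frac{n-1}{n+1}\cdot\frac{2L}{n}+\frac{2}{n(n+1)}\cdot L=\frac{2L(n-1)+2L}{n(n+1)}=\frac{2L}{n+1}, \]
which closes the induction.

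The computation itself is routine, so the only real obstacle is bookkeeping around the two ends of the $\min$. Concretely, one must (i) anchor the induction at $n=2$ rather than $n=1$, since the $n=1\to 2$ transition is degenerate (both weights equal $1$) and is precisely what forces $a_2=0$, and (ii) verify that $b_n-b_{n+1}\ge 0$ for $n\ge 2$, because $c_n$ is allowed to be negative and hence may only be replaced by its upper bound $L$ when it multiplies a nonnegative factor. The monotonicity of $(b_n)_{n\ge 2}$ noted in the first paragraph is exactly what guarantees (ii).
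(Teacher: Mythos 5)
Your proof is correct. Note that the paper itself gives no proof of this lemma: it is imported verbatim from Sabach--Shtern \cite{SabSht} (their Lemma 3), so there is nothing in the paper to compare against; your induction is essentially the standard argument from that source. The two points you flag are indeed the only delicate ones, and you handle both properly: the step from $n=1$ to $n=2$ cannot use the formula $b_n=2/n$ (since $b_1=1\neq 2$), and instead the hypothesis at $n=1$ degenerates to $a_2\le (1-b_2)a_1+(b_1-b_2)c_1=0$, giving $a_2=0$ as a separate anchor; and the replacement of $c_n$ by its upper bound $L$ is only legitimate because the coefficient $b_n-b_{n+1}=2/(n(n+1))\ge 0$ for $n\ge 2$, which matters precisely because the $c_n$ are merely real and may be negative. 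The telescoping computation
\[ \frac{n-1}{n+1}\cdot\frac{2L}{n}+\frac{2}{n(n+1)}\cdot L=\frac{2Ln}{n(n+1)}=\frac{2L}{n+1} \]
is verified correctly, so the induction closes and the proposal stands as a complete, self-contained proof.
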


\begin{proposition}\label{linear-rate-Halpern} 
Let $\lambda_n=\lambda \in (0,1]$, $\beta_1=0$ and $\beta_n=1-\frac{2}{n}$ for $n\ge 2$.  Then 
for all $n\ge 0$, 
\begin{align*}
d(y_{n+1},y_n) & \le \frac{2M}{n+1},  \\
d(y_n,Ty_n) & \le \frac{4M}{\lambda(n+1)},
\end{align*}
where $M\in\N^*$ is such that $M\ge 4\max\{ d(u,p),d(y_0,p)\}$. 
\end{proposition}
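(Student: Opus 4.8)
The plan is to derive both estimates from the two recurrences already recorded for $(y_n)$: \eqref{prop-5-1-3} will give the asymptotic regularity bound via Lemma~\ref{lemma3SabSht}, and \eqref{prop-5-1-6} will give the $T$-asymptotic regularity bound. The observation that makes the first bound work is that, for the present scalars, $\beta_n = 1 - b_n$ where $b_n = \min\{2/n,1\}$ is exactly the sequence of Lemma~\ref{lemma3SabSht}: indeed $b_1 = 1 = 1-\beta_1$ and $b_n = 2/n = 1-\beta_n$ for $n\ge 2$. Since $\lambda_n=\lambda$ is constant, the term $|\lambda_n-\lambda_{n-1}|d(y_{n-1},Ty_{n-1})$ in \eqref{prop-5-1-3} vanishes, so putting $a_n := d(y_n,y_{n-1})$ turns \eqref{prop-5-1-3} into
\[ a_{n+1} \le \beta_{n+1}a_n + |\beta_{n+1}-\beta_n|\,c_n = (1-b_{n+1})a_n + (b_n-b_{n+1})c_n \quad (n\ge 1), \]
where the last equality uses that $(b_n)$ is non-increasing, so $|\beta_{n+1}-\beta_n| = b_n - b_{n+1}\ge 0$. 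This is precisely the recurrence required by Lemma~\ref{lemma3SabSht} with $L=M$.

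It then remains to verify the two side conditions of that lemma. For $a_1\le M$ I would use that $\beta_1=0$ forces $y_1=u$, whence $a_1 = d(u,y_0)\le d(u,p)+d(p,y_0)\le M/2$ by $M\ge 4\max\{d(u,p),d(y_0,p)\}$. For $c_n\le M$, recall $c_n=(1-\lambda)d(u,y_{n-1})+\lambda d(u,Ty_{n-1})$; for $n\ge 2$ both distances are at most $M$ by \eqref{prop-5-2-6-8-9}, while for $n=1$ one bounds $d(u,y_0),d(u,Ty_0)$ directly by $M/2$ (the latter from nonexpansiveness, $d(u,Ty_0)\le d(u,p)+d(Tp,Ty_0)\le d(u,p)+d(p,y_0)$). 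Lemma~\ref{lemma3SabSht} then yields $a_n\le 2M/n$ for all $n\ge 1$, that is $d(y_{n+1},y_n)\le 2M/(n+1)$ for all $n\ge 0$, which is the first claim.

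For the second estimate I would proceed as in the proof of Proposition~\ref{HM-as-reg-to-T-as-reg}, starting from \eqref{prop-5-1-6}: bounding $\beta_{n+1}\le 1$ and $d(u,Ty_n)\le M$ and moving $(1-\lambda)d(y_n,Ty_n)$ to the left-hand side gives, for $n\ge 1$,
\[ \lambda\, d(y_n,Ty_n) \le d(y_n,y_{n+1}) + M(1-\beta_{n+1}). \]
Substituting the first bound $d(y_n,y_{n+1})\le 2M/(n+1)$ together with $1-\beta_{n+1}=2/(n+1)$ (valid for $n\ge 1$) yields $d(y_n,Ty_n)\le 4M/(\lambda(n+1))$. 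The case $n=0$ is not covered by \eqref{prop-5-1-6}, so I would dispatch it separately via $d(y_0,Ty_0)\le 2d(y_0,p)\le M/2\le 4M/\lambda$. The only genuinely delicate points are matching the scalars to the hypotheses of Lemma~\ref{lemma3SabSht} — noticing the identity $\beta_n = 1-\min\{2/n,1\}$ and the monotonicity that removes the absolute value — and tracking index ranges, since \eqref{prop-5-2-6-8-9}, \eqref{prop-5-1-6} and \eqref{prop-5-1-3} are stated only from $n\ge 1$, which forces the separate treatment of the $n=0$ and $n=1$ boundary cases.
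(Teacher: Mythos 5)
Your proposal is correct and follows essentially the same route as the paper's proof: the recurrence \eqref{prop-5-1-3} combined with Lemma~\ref{lemma3SabSht} (via the identity $\beta_n=1-\min\{2/n,1\}$) for the first bound, and the inequality $\lambda\, d(y_n,Ty_n)\le d(y_n,y_{n+1})+M(1-\beta_{n+1})$ derived from \eqref{prop-5-1-6} (i.e., the proof of \eqref{as-reg-HM-ineq-2}) for the second. The only difference is that you verify the boundary cases $n=0,1$ (e.g., $y_1=u$ from $\beta_1=0$, and $d(y_0,Ty_0)\le 2d(y_0,p)$) explicitly, whereas the paper subsumes them under the citation of \eqref{prop-5-2-6-8-9}; this is a harmless refinement, not a different argument.
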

\begin{proof} 

Apply \eqref{prop-5-1-3} to get that for all $n\geq 1$, 
\[ d(y_{n+1},y_n)\le \beta_{n+1}d(y_n,y_{n-1})+|\beta_{n+1}-\beta_n|c_n, \]
where $c_n=(1-\lambda)d(u,y_{n-1})+ \lambda d(u,Ty_{n-1})$.  By \eqref{prop-5-2-6-8-9},  we have that 
\[ \max\{ d(y_1,y_0),c_n\}\le M. \]
As $\beta_n=1-\min\{ 2/n,1\}$ for all $n\geq 1$, we can apply 
Lemma~\ref{lemma3SabSht} with $a_n=d(y_n,y_{n-1})$ to obtain that for $n\ge 0$,
\[ d(y_{n+1},y_n)\le \frac{2M}{n+1} \]
and so by - the proof of - \eqref{as-reg-HM-ineq-2} above 
\[ d(y_n,Ty_n)\le \frac{2M}{\lambda (n+1)}+\frac{2M}{\lambda (n+1)} =
\frac{4M}{\lambda (n+1)}. \]
\end{proof}

\subsection{From modified Halpern iteration to Tikhonov-Mann iteration}\label{rates-HM-TM-subsection}

We derive rates of ($T$-)asymptotic regularity of the Tikhonov-Mann iteration  from the rates 
of the modified Halpern iterations 
computed in Subsection~\ref{rates-HM-subsection}.

Let $(x_n)$ be the Tikhonov-Mann iteration, defined by \eqref{def-TKM-W}, and  $K\in\N^*$ be such that $K\geq M_p$, where $p$ is a fixed point of $T$ 
and $M_p$ is given by \eqref{def-M-bound-xn}.

\begin{proposition}\label{prop:TM-from-HM}
Assume that $(\cmh2_q)$, $(\cmh3_q)$, and $(\cmh4_q)$ hold.  
\begin{enumerate}
\item If $(\cmh1_q)$ holds and $\Sigma$ is defined as in Proposition~\ref{HM-as-reg-new}.\eqref{HM-as-reg-new-1}, then 
$(x_n)$ is asymptotically regular with rate 
\begin{align*}
\Phi(k) & = \max\{\rfourq(6K(k + 1) - 1), \Sigma(3k+2)\}.
\end{align*}
\item  If  $(\cmh1^*_q)$ holds and  $\psi$, $\Sigma^*$ are  as in Proposition~\ref{HM-as-reg-new}.\eqref{HM-as-reg-new-1star}, then 
$(x_n)$ is asymptotically regular with rate 
\begin{align*}
\Phi(k) & = \max\{\rfourq(6K(k + 1) - 1), \Sigma^*(3k+2)\}.
\end{align*}
\item If  $(\cmh5_q)$ holds and $\Sigma$ is a rate of asymptotic regularity of $(y_n),$ 
then
\begin{align*}
\widehat{\Phi}(k) & = \max\left\{\rfourq(6K(k + 1) - 1), N_\Lambda, \Sigma(6\Lambda(k + 1) - 1), 
\rfourq(24K\Lambda(k + 1) - 1) \right\}   
\end{align*}
is a rate of $T$-asymptotic regularity of $(x_n)$.
\end{enumerate}
\end{proposition}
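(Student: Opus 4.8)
The plan is to obtain each of the three rates for the Tikhonov--Mann iteration $(x_n)$ by composing the corresponding rate for the modified Halpern iteration, established in Subsection~\ref{rates-HM-subsection}, with the transfer result of Proposition~\ref{prop:asym-reg-MH-TM}.\eqref{xn-un-as-reg}. To set this up I would pass to the modified Halpern iteration $(y_n)$ with initial point $y_0=(1-\beta_0)u+\beta_0 x_0=u_0$; by Proposition~\ref{main-link} this yields $y_n=u_n$ for all $n\in\N$, so that every rate of ($T$-)asymptotic regularity of $(y_n)$ is simultaneously one of $(u_n)$. Proposition~\ref{prop:asym-reg-MH-TM}.\eqref{xn-un-as-reg} (applicable since $(\cmh4_q)$ is assumed) then converts any rate $\Phi$ of $(u_n)$ into the rate $\max\{\alpha(3k+2),\Phi(3k+2)\}$ of $(x_n)$, where $\alpha$ is given by \eqref{def-alpha}; the recurring arithmetic fact I will use throughout is $\alpha(3k+2)=\rfourq(2K(3k+3)-1)=\rfourq(6K(k+1)-1)$.

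For (i) and (ii) I would feed into this machine the rates from Proposition~\ref{HM-as-reg-new}. Under $(\cmh1_q),(\cmh2_q),(\cmh3_q)$ the map $\Sigma$ of \eqref{HM-rate-ar-C1} is a rate of asymptotic regularity of $(y_n)=(u_n)$, and under $(\cmh1^*_q),(\cmh2_q),(\cmh3_q)$ the same holds for $\Sigma^*$ of \eqref{HM-rate-ar-C1*}. Transferring via Proposition~\ref{prop:asym-reg-MH-TM}.\eqref{xn-un-as-reg} and substituting the value of $\alpha(3k+2)$ computed above gives exactly $\Phi(k)=\max\{\rfourq(6K(k+1)-1),\Sigma(3k+2)\}$ and, respectively, $\Phi(k)=\max\{\rfourq(6K(k+1)-1),\Sigma^*(3k+2)\}$. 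No extra hypotheses are needed here, which is why (i) and (ii) do not require $(\cmh5_q)$.

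For (iii) I would start from the given rate $\Sigma$ of asymptotic regularity of $(y_n)$ and first apply Proposition~\ref{HM-as-reg-to-T-as-reg} --- legitimate because $(\cmh4_q)$ and $(\cmh5_q)$ hold --- to obtain the rate $\widehat{\Sigma}$ of \eqref{def-wSigma} for the $T$-asymptotic regularity of $(y_n)=(u_n)$. Transferring this via Proposition~\ref{prop:asym-reg-MH-TM}.\eqref{xn-un-as-reg} yields $\widehat{\Phi}(k)=\max\{\alpha(3k+2),\widehat{\Sigma}(3k+2)\}$, and expanding $\widehat{\Sigma}(3k+2)=\max\{N_\Lambda,\Sigma(6\Lambda(k+1)-1),\rfourq(6M\Lambda(k+1)-1)\}$ together with $\alpha(3k+2)=\rfourq(6K(k+1)-1)$ reproduces the displayed $\widehat{\Phi}$, provided $M$ is taken equal to $4K$.

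The only genuine point requiring care, and the one I expect to be the main obstacle, is reconciling the constant $M\ge 4\max\{d(u,p),d(y_0,p)\}$ governing the modified Halpern rates with the constant $K\ge M_p=\max\{d(x_0,p),d(u,p)\}$ on the Tikhonov--Mann side. Since $y_0=u_0=(1-\beta_0)u+\beta_0 x_0$, axiom (W1) gives $d(y_0,p)\le(1-\beta_0)d(u,p)+\beta_0 d(x_0,p)\le M_p\le K$, and likewise $d(u,p)\le K$, so one may legitimately take $M=4K\in\N^*$. With this choice the third argument of $\widehat{\Sigma}(3k+2)$ becomes $\rfourq(6\cdot 4K\cdot\Lambda(k+1)-1)=\rfourq(24K\Lambda(k+1)-1)$, exactly as in (iii). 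Apart from this bookkeeping of constants and the routine arithmetic of the index substitutions, the proof is a direct composition of the cited results, so I anticipate no further difficulty.
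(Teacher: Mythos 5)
Your proposal is correct and follows essentially the same route as the paper: define $y_0=(1-\beta_0)u+\beta_0 x_0$, use (W1) to get $d(y_0,p)\le K$ so that $M=4K$ is admissible in Propositions~\ref{HM-as-reg-new} and \ref{HM-as-reg-to-T-as-reg}, and then transfer the Halpern rates to $(x_n)$ via Proposition~\ref{prop:asym-reg-MH-TM} (your use of Proposition~\ref{main-link} together with part~\eqref{xn-un-as-reg} is exactly the content of part~\eqref{xn-yn-as-reg}, which the paper invokes directly). The index bookkeeping $\alpha(3k+2)=\rfourq(6K(k+1)-1)$ and $\rfourq(2\cdot 4K\cdot\Lambda(3k+3)-1)=\rfourq(24K\Lambda(k+1)-1)$ is also exactly as in the paper.
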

\begin{proof}
Define $y_0 = (1 - \beta_0) u + \beta_0 x_0$ and consider the modified Halpern iteration 
$(y_n)$ starting with $y_0$. By an application of (W1), we get that 
$$d(y_0,p)\leq (1 - \beta_0) d(u,p) + \beta_0 d(x_0,p)\leq K.$$
 Hence, we can use Proposition~\ref{HM-as-reg-new} 
with $M=4K$ to get rates of asymptotic regularity of
$(y_n)$. Apply now Proposition~\ref{prop:asym-reg-MH-TM}.\eqref{xn-yn-as-reg} to obtain (i) and (ii).
\\ As for item (iii), assume, furthermore, that $(\cmh5_q)$ holds and let 
$\Sigma$ be a rate of asymptotic regularity of $(y_n)$. 
Then $\widehat{\Sigma}$ defined as in Proposition~\ref{HM-as-reg-to-T-as-reg} is a rate of $T$-asymptotic regularity of $(y_n)$.  
Applying Proposition~\ref{prop:asym-reg-MH-TM}.\eqref{xn-yn-as-reg}, we get that 
\begin{align*}
\widehat{\Phi}(k) & = \max\left\{\rfourq(6K(k + 1) - 1), N_\Lambda, \Sigma(6\Lambda(k + 1) - 1), 
\rfourq(24K\Lambda(k + 1) - 1) \right\}   
\end{align*}
is a rate of $T$-asymptotic regularity of $(x_n)$.
\end{proof}

We consider now again the case $\lambda_n=\lambda \in (0,1]$, $\beta_1=0$ and $\beta_n = 1-\frac{2}{n}$ for $n \ge 2$. 
Let $y_0=(1-\beta_0)u+\beta_0 x_0$. As $y_n=u_n$ (by Proposition~\ref{main-link}), we get from \eqref{dxn-un-Mp} that 
for all $n\ge 1$,
\[ d(x_n,y_n)\le \frac{4K}{n}.\]
Since $d(y_0,p)\le K$, we can apply Proposition~\ref{linear-rate-Halpern} with $M=4K$ and reason as  in the proof of 
Proposition \ref{prop:asym-reg-MH-TM}.\eqref{xn-un-as-reg} to obtain that for all $n\geq 1$, 
\begin{align*}
d(x_n,x_{n+1}) & \le d(x_n,y_n)+d(x_{n+1},y_{n+1})+d(y_n,y_{n+1})
\le \frac{4K}{n}+\frac{12K}{n+1} < \frac{16K}{n},\\
d(x_n,Tx_n) & \le 2d(x_n,y_n)+d(y_n,Ty_n)\le \frac{8K}{n}+
\frac{16K}{\lambda(n+1)} < \frac{24K}{\lambda n}.
\end{align*}
So we have obtained  a linear rate of 
asymptotic regularity also for the Tikhonov-Mann iteration as a consequence 
of the corresponding fact for the modified Halpern iteration and the 
reduction of the former to the latter. 

\subsection{From Tikhonov-Mann iteration to modified Halpern iteration}\label{rates-TM-HM-subsection}

Let  $(y_n)$ be the modified Halpern iteration, defined by \eqref{def-MH-W}, $p$ be a fixed point of $T$,
and $K\in\N^*$ be such that $K\geq \max\{d(u,p), d(y_0,p)\}$.

The following proposition gives rates of ($T$-)asymptotic regularity of $(y_n)$.

\begin{proposition}\label{prop:MH-from-TM}
Assume that $(\cmh2_q)$, $(\cmh3_q)$, and $(\cmh4_q)$ hold.  Define 
\begin{align*}
\chi(k) & = \max\{\rtwoq(8K(k + 1) - 1), \rthreeq(8K(k + 1) - 1)\},\\
\theta(k) &= \rfourq(6K(k + 1) - 1).
\end{align*}
\begin{enumerate}
\item\label{prop:MH-from-TM-1} Suppose that $(\cmh1_q)$ holds. Then 
\begin{enumerate}
\item $(y_n)$ is asymptotically regular with rate
\begin{align*}
\Sigma(k) & = \max\{\theta(k), \roneq(\chi(9k + 8) + 2 + \lceil\ln(18K(k + 1))\rceil) + 1 \}.
\end{align*}
\item If $(\cmh5_q)$ holds, then $(y_n)$ is $T$-asymptotically regular with rate 
\begin{align*}
\widehat{\Sigma}(k) & = \max\{\theta(k), N_\Lambda, \Sigma(6\Lambda(k + 1) - 1), \rfourq(12K\Lambda(k + 1) - 1) \}.\\
\end{align*}
\end{enumerate}
\item\label{prop:MH-from-TM-2} Suppose that $(\cmh1^*_q)$ holds and that $\psi: \N \to \N^*$ is such 
that $\frac{1}{\psi(k)} \leq \prod\limits_{n = 0}^{\chi(3k + 2)} \beta_{n + 1}.$
Then 
\begin{enumerate}
\item $(y_n)$ is asymptotically regular with rate
\begin{align*}
\Sigma^*(k) & =   \max\{\theta(k), \ronestarq(\psi^*(k)- 1) + 1,  \chi(9k+8) + 2\},
\end{align*}
where $\psi^*(k)=18K(k + 1)\psi(3k + 2)$. 
\item If $(\cmh5_q)$ holds, then $(y_n)$ is $T$-asymptotically regular with rate 
\begin{align*}
\widehat{\Sigma^*}(k) & = \max\{\theta(k), N_\Lambda, \Sigma^*(6\Lambda(k + 1) - 1), \rfourq(12K\Lambda(k + 1) - 1) \}.
\end{align*}
\end{enumerate}
\end{enumerate}
\end{proposition}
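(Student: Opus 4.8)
The plan is to run the reduction of Proposition~\ref{prop:TM-from-HM} in reverse: attach to the given modified Halpern iteration $(y_n)$ a Tikhonov-Mann iteration whose auxiliary sequence coincides with $(y_n)$, and then import the quantitative asymptotic regularity results for the Tikhonov-Mann iteration from \cite{CheLeu22}. Concretely, I would let $(u_n)$ be the sequence determined by $u_0=y_0$ and the recursion \eqref{def-TKM-W-un-main}; by the induction in the proof of Proposition~\ref{main-link} it satisfies $u_n=y_n$ for all $n$, so that a rate of (asymptotic) regularity of $(u_n)$ is one for $(y_n)$ and conversely. A short induction using (W1) and the nonexpansivity of $T$ gives $d(u_n,p)\le\max\{d(u,p),d(y_0,p)\}\le K$, and (W7) applied to $u_{n+1}=(1-\beta_{n+1})u+\beta_{n+1}x_{n+1}$ gives $d(x_{n+1},u_{n+1})\le 2K(1-\beta_{n+1})$; thus all the constants occurring in the Cheval--Leu\c{s}tean bounds may be taken relative to $K=\max\{d(u,p),d(y_0,p)\}$.

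For the asymptotic regularity statements (a) I would invoke the rate of the Tikhonov-Mann iterate $(x_n)$ from \cite{CheLeu22}. Under $(\cmh2_q)$--$(\cmh4_q)$ together with $(\cmh1_q)$ this rate has the form $\Theta(k)=\roneq(\chi(3k+2)+2+\lceil\ln(6K(k+1))\rceil)+1$, and under $(\cmh1^*_q)$ the analogous $\ronestarq$-rate $\Theta^*(k)=\max\{\ronestarq(6K(k+1)\psi(k)-1)+1,\chi(3k+2)+2\}$, where $\chi$ bundles the Cauchy moduli $\rtwoq,\rthreeq$. I would then transfer $\Theta$ (resp. $\Theta^*$) from $(x_n)$ to $(u_n)=(y_n)$ by Proposition~\ref{prop:asym-reg-MH-TM}.\eqref{xn-un-as-reg}, which composes the shift $k\mapsto 3k+2$ and adjoins $\alpha(3k+2)=\theta(k)$. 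Composing the inner and outer $k\mapsto 3k+2$ turns $\chi(3k+2)$ into $\chi(9k+8)$ and $6K(k+1)$ into $18K(k+1)$; this reproduces $\Sigma$ and $\Sigma^*$ exactly, and in particular explains the arguments $9k+8$ and $\psi^*(k)=18K(k+1)\psi(3k+2)$.

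For the $T$-asymptotic regularity statements (b) I would pass from asymptotic regularity to $T$-asymptotic regularity using $(\cmh5_q)$, as in the proof of Proposition~\ref{HM-as-reg-to-T-as-reg} but with the sharper constant available on the Tikhonov-Mann side. From the identity $d(u_n,x_{n+1})=\lambda_n d(u_n,Tu_n)$ (by (W7), since $x_{n+1}=(1-\lambda_n)u_n+\lambda_n Tu_n$) together with $d(x_{n+1},u_{n+1})\le 2K(1-\beta_{n+1})$ one obtains $\lambda_n d(u_n,Tu_n)\le d(u_n,u_{n+1})+2K(1-\beta_{n+1})$, whence, for $n\ge N_\Lambda$ so that $\lambda_n\ge 1/\Lambda$, $d(u_n,Tu_n)\le\Lambda d(u_n,u_{n+1})+2K\Lambda(1-\beta_{n+1})$. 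Splitting the target $1/(k+1)$ into halves and feeding in the asymptotic regularity rate $\Sigma$ (resp. $\Sigma^*$) together with $(\cmh4_q)$ yields a $T$-asymptotic regularity rate of the form $\max\{N_\Lambda,\Sigma(2\Lambda(k+1)-1),\rfourq(4K\Lambda(k+1)-1)\}$; transferring it to $(y_n)$ once more by Proposition~\ref{prop:asym-reg-MH-TM}.\eqref{xn-un-as-reg} replaces $2\Lambda,4K\Lambda$ by $6\Lambda,12K\Lambda$ and re-adjoins $\theta(k)$, giving $\widehat{\Sigma}$ and $\widehat{\Sigma^*}$.

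The step I expect to be the main obstacle is the very first one. In a general $W$-hyperbolic space the map $z\mapsto(1-\beta_0)u+\beta_0 z$ need not be invertible, so one cannot simply solve for a Tikhonov-Mann starting point $x_0$ realizing $(1-\beta_0)u+\beta_0x_0=y_0$. The way around it is to treat $(u_n)$ as a self-standing sequence defined by $u_0=y_0$ and \eqref{def-TKM-W-un-main}, and to re-derive every bound imported from \cite{CheLeu22}, namely the uniform bound $d(u_n,p)\le K$ and the decay $d(x_{n+1},u_{n+1})\le 2K(1-\beta_{n+1})$, directly from $K\ge\max\{d(u,p),d(y_0,p)\}$ rather than from $M_p=\max\{d(x_0,p),d(u,p)\}$. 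The remaining work is purely the bookkeeping of the nested $k\mapsto 3k+2$ shifts and of the numerical constants ($8K$ inside $\chi$, $18K$ in the logarithm, $12K\Lambda$ in the $\rfourq$-argument) as the rates are pushed through the transfer, which is routine but error-prone.
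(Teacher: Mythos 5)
Your overall route is the paper's own: attach a Tikhonov--Mann iteration to $(y_n)$, import the rates of \cite[Theorems~4.1, 4.2]{CheLeu22}, and push them through Proposition~\ref{prop:asym-reg-MH-TM}; your bookkeeping of the nested $k\mapsto 3k+2$ shifts (turning $\chi(3k+2)$ into $\chi(9k+8)$, $6K$ into $18K$, $2\Lambda, 4K\Lambda$ into $6\Lambda, 12K\Lambda$, and adjoining $\theta(k)=\alpha(3k+2)$) matches the paper exactly. The genuine gap sits precisely at the step you single out as the main obstacle. Your workaround --- treating $(u_n)$ as a self-standing sequence with $u_0=y_0$ and ``re-deriving every bound imported from \cite{CheLeu22}'' --- is not adequate as described: the theorems of \cite{CheLeu22} are about a genuine Tikhonov--Mann iteration with a starting point $x_0$, and their rates come out of a full quantitative proof (a recursive inequality for $d(x_{n+1},x_n)$ involving $|\beta_{n+1}-\beta_n|$ and $|\lambda_{n+1}-\lambda_n|$, a quantitative Xu-type lemma, etc.), not merely from the two bounds $d(u_n,p)\le K$ and $d(x_{n+1},u_{n+1})\le 2K(1-\beta_{n+1})$ that you propose to re-anchor. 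To invoke those theorems for your headless sequence $(x_n)_{n\ge 1}$ you would either have to re-run their entire proof, or view $(x_n)_{n\ge 1}$ as a Tikhonov--Mann iteration with starting point $x_1$ and shifted scalars $(\beta_{n+1}),(\lambda_{n+1})$, in which case all moduli acquire index shifts and the rates $\Sigma,\Sigma^*$ as stated do not come out without further adjustment.

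The paper dissolves the obstacle in one line: $\beta_0$ occurs nowhere in the definition \eqref{def-MH-W} of $(y_n)$ (only $\beta_{n+1}$ for $n\ge 0$ does), and none of the quantitative hypotheses is damaged by redefining $\beta_0:=1$ --- the sum in $(\cmh1_q)$ starts at $n=2$, the product in $(\cmh1^*_q)$ involves only $\beta_2,\beta_3,\dots$, the Cauchy modulus in $(\cmh2_q)$ concerns differences of partial sums, which never involve $|\beta_1-\beta_0|$, and $1-\beta_0=0$ is compatible with any $\rfourq$. With $\beta_0=1$, (W5) gives $y_0=(1-\beta_0)u+\beta_0 y_0$, so taking $x_0:=y_0$ produces a bona fide Tikhonov--Mann iteration with $M_p=\max\{d(y_0,p),d(u,p)\}\le K$, and \cite{CheLeu22} applies verbatim; no invertibility of $z\mapsto(1-\beta_0)u+\beta_0 z$ is ever needed. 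Two smaller points: in your step for (b), the inequality $d(u_n,Tu_n)\le \Lambda d(u_n,u_{n+1})+2K\Lambda(1-\beta_{n+1})$ already concerns $(u_n)=(y_n)$, so it directly yields a $T$-asymptotic regularity rate for $(y_n)$ (indeed a slightly sharper one); your ``transfer once more'' through Proposition~\ref{prop:asym-reg-MH-TM} is redundant and merely happens to reproduce the larger, still valid, constants of $\widehat{\Sigma}$. And in part (ii) one must observe, as the paper does, that $\ronestarq$ also serves as a rate of convergence of $\prod_{n=0}^\infty\beta_{n+1}$ towards $0$, i.e.\ as the modulus for condition $(C2_q)$ of \cite{CheLeu22}, since that product starts at $n=0$ rather than $n=1$; your proposal silently identifies the two.
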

\begin{proof}
Take  $\beta_0=1$. Then, by (W5), $y_0=(1 - \beta_0) u + \beta_0 y_0$. 
Apply \cite[Theorems~4.1,4.2]{CheLeu22} for the Tikhonov-Mann iteration $(x_n)$ starting with $y_0$ to obtain 
rates of ($T$-)asymptotic regularity for this iteration and use
Proposition~\ref{prop:asym-reg-MH-TM}.\eqref{xn-yn-as-reg} to translate them into rates for $(y_n)$. 

For the proof of \eqref{prop:MH-from-TM-2}, remark that if $(\cmh1^*_q)$ holds, then $\ronestarq$ is also a rate of convergence of  
$\left(\prod\limits_{n=0}^\infty \beta_{n+1}\right)$ towards $0$, hence \cite[$(C2_q)$]{CheLeu22} holds with $\sigma_2:=\ronestarq$.
\end{proof}

\mbox{}

The first and the third author computed, for the particular case $\lambda_n=\lambda \in (0,1]$ and $\beta_n = 1-\frac1{n+1}$, quadratic rates of ($T$-)asymptotic regularity for the Tikhonov-Mann iteration (see \cite[Corollary 4.3]{CheLeu22}).  We show in the sequel that we can  use Lemma~\ref{lemma3SabSht}  for this iteration, too, and obtain, as a consequence, 
linear rates of  ($T$-)asymptotic regularity by letting $\beta_n=1-\frac{2}{n}$.

\begin{proposition}\label{linear-rate-TM} 
Assume that $\lambda_n=\lambda \in (0,1]$, $\beta_1=0$ and $\beta_n=1-\frac{2}{n}$ for $n\ge 2$, and let $(x_n)$ be the Tikhonov-Mann iteration.  Then 
for all $n\ge 1$, 
\begin{align*}
   d(x_{n + 1}, x_{n}) & \leq \frac{4K}{n}, \\
   d(x_n, T x_n) &\leq  \frac{8K}{\lambda n}.
\end{align*}
\end{proposition}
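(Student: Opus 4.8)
The plan is to apply Lemma~\ref{lemma3SabSht} \emph{directly} to the Tikhonov-Mann iteration, rather than transporting Proposition~\ref{linear-rate-Halpern} across the identification $x_{n+1}=v_n$ of Proposition~\ref{main-link}: the latter route forces the constant $M=4K$ of Proposition~\ref{linear-rate-Halpern} and only yields $d(x_{n+1},x_n)\le 8K/n$, whereas the direct argument keeps the sharp constant. I would first produce a one-step recurrence for the consecutive differences. Using $\lambda_n=\lambda$ and writing $x_{m+1}=(1-\lambda)u_m+\lambda Tu_m$ and $x_m=(1-\lambda)u_{m-1}+\lambda Tu_{m-1}$, an application of (W4) followed by the nonexpansivity of $T$ gives $d(x_{m+1},x_m)\le d(u_m,u_{m-1})$. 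Inserting the intermediate point $(1-\beta_m)u+\beta_m x_{m-1}$ and applying (W4) to the first piece (common weight $\beta_m$, endpoints $u,x_m$ and $u,x_{m-1}$) and (W2) to the second (common endpoints $u,x_{m-1}$, weights $\beta_m,\beta_{m-1}$), I obtain, for $m\ge 2$,
\[ d(x_{m+1},x_m)\le \beta_m\, d(x_m,x_{m-1})+|\beta_m-\beta_{m-1}|\,d(u,x_{m-1}). \]

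Next comes the index alignment required by Lemma~\ref{lemma3SabSht}. Since the contraction coefficient $\beta_m$ multiplies $d(x_m,x_{m-1})$, I would set $a_n:=d(x_{n+1},x_n)$ and $c_n:=d(u,x_n)$; reindexing by $m=n+1$ turns the recurrence into $a_{n+1}\le \beta_{n+1}a_n+|\beta_{n+1}-\beta_n|c_n$ for $n\ge 1$. With $b_n=\min\{2/n,1\}$, the hypothesis $\beta_1=0$, $\beta_n=1-2/n\ (n\ge 2)$ reads precisely $\beta_n=1-b_n$, so $\beta_{n+1}=1-b_{n+1}$ and, as $(b_n)$ is nonincreasing, $|\beta_{n+1}-\beta_n|=b_n-b_{n+1}$. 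Hence $a_{n+1}\le (1-b_{n+1})a_n+(b_n-b_{n+1})c_n$, the exact hypothesis of the lemma. Using the boundedness $d(x_n,p)\le M_p\le K$ (as in \cite[Lemma~3.1(ii)]{CheLeu22}), I would check that $a_1=d(x_2,x_1)\le 2M_p\le 2K$ and $c_n=d(u,x_n)\le 2M_p\le 2K$, so Lemma~\ref{lemma3SabSht} with $L=2K$ gives $d(x_{n+1},x_n)=a_n\le 2L/n=4K/n$ for all $n\ge 1$.

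For the $T$-asymptotic regularity I would \emph{not} use the decomposition $d(x_n,Tx_n)\le 2d(x_n,u_n)+d(u_n,Tu_n)$ (which loses a factor), but instead compare $x_{n+1}$ with $w_n:=(1-\lambda)x_n+\lambda Tx_n$. By (W7), $\lambda\,d(x_n,Tx_n)=d(x_n,w_n)\le d(x_n,x_{n+1})+d(x_{n+1},w_n)$, and since $x_{n+1}$ and $w_n$ are convex combinations with the common weight $\lambda$ of the pairs $(u_n,Tu_n)$ and $(x_n,Tx_n)$, (W4) together with the nonexpansivity of $T$ gives $d(x_{n+1},w_n)\le d(u_n,x_n)$. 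Therefore $\lambda\,d(x_n,Tx_n)\le d(x_n,x_{n+1})+d(x_n,u_n)$. Substituting the rate $d(x_n,x_{n+1})\le 4K/n$ just obtained and $d(x_n,u_n)\le 2M_p(1-\beta_n)\le 4K/n$ from \eqref{dxn-un-Mp} yields $d(x_n,Tx_n)\le \frac1\lambda\cdot\frac{8K}{n}=\frac{8K}{\lambda n}$ for all $n\ge 1$.

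The step I expect to be most delicate is the index bookkeeping together with the tracking of constants. One must choose $a_n=d(x_{n+1},x_n)$ rather than $d(x_n,x_{n-1})$ — the opposite of the Halpern case in Proposition~\ref{linear-rate-Halpern} — so that the coefficient $\beta_{n+1}=1-b_{n+1}$ produced by the recurrence matches the $(1-b_{n+1})$ demanded by Lemma~\ref{lemma3SabSht}, and one must verify that both $a_1$ and the auxiliary sequence $c_n=d(u,x_n)$ are bounded by $L=2K$; it is this value $L=2K$ (in place of the $M=4K$ coming from the Halpern reduction) that is responsible for the sharp constant $4K$.
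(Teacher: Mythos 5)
Your proof is correct and follows essentially the same route as the paper: both apply Lemma~\ref{lemma3SabSht} directly to $a_n=d(x_{n+1},x_n)$ with $L=2K$ (rather than transporting Proposition~\ref{linear-rate-Halpern} through Proposition~\ref{main-link}), and both then deduce $T$-asymptotic regularity from $\lambda\, d(x_n,Tx_n)\le d(x_n,x_{n+1})+d(x_n,u_n)\le d(x_n,x_{n+1})+2K(1-\beta_n)$. The only difference is cosmetic: where the paper cites the recurrence $d(x_{n+2},x_{n+1})\le \beta_{n+1}d(x_{n+1},x_n)+2K|\beta_{n+1}-\beta_n|$ from \cite[Proposition~3.2(7)]{CheLeu22} and the final inequality from the proof of \cite[Proposition~5.5]{CheLeu22}, you re-derive both from the axioms (W2), (W4), (W7) and nonexpansivity.
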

\begin{proof}
Using \cite[Proposition~3.2(7)]{CheLeu22}, we get that for all $n\geq 1$, 
\begin{equation*}
d(x_{n + 2}, x_{n+1}) \leq \beta_{n+1} d(x_{n+1}, x_n) + 2K |\beta_{n+1} - \beta_n|  \leq   \beta_{n+1} d(x_{n+1}, x_n) + 2K.
\end{equation*}
Moreover, $d(x_1, x_0) \leq d(x_1, p) + d(p, x_0) \leq 2K$. Applying Lemma~\ref{lemma3SabSht} for $a_n = d(x_{n+1}, x_n)$, $b_n = 1 - \beta_n$, $c_n = 2K$, and $L=2K$, 
we get that for all $n\geq 1$, 
 \begin{align*}
   d(x_{n + 1}, x_{n}) \leq \frac{4K}n. 
  \end{align*}
We obtain, as in the proof of \cite[Proposition~5.5]{CheLeu22}, that for all $n\geq 1$,
  \begin{align*}
    d(x_n, T x_n) &\leq \frac{1}{\lambda} d(x_n, x_{n + 1}) + \frac{2K}{\lambda} (1 - \beta_n) \leq 
 \frac{4K}{\lambda n} + \frac{4K}{\lambda n} = \frac{8K}{\lambda n}.
  \end{align*}
\end{proof}
 
We argue now as in Section~\ref{rates-HM-TM-subsection} to get, from Proposition~\ref{linear-rate-TM},  linear rates for  the modified Halpern iteration $(y_n)$: for all $n\geq 1$, 
  \begin{align*}
  d(y_{n + 1}, y_n)  & \leq d(x_n, y_n) + d(x_{n + 1}, y_{n + 1}) + d(x_n, x_{n + 1}) \leq \frac{12K}n, \\
    d(y_n, T y_n) & \leq 2d(x_n, y_n) + d(x_n, T x_n)  \leq \frac{16K}{\lambda n}. 
  \end{align*}
So for $d(y_n,Ty_n)$ the detour through the Tikhonov-Mann iteration gives 
(almost) exactly the same rate as the direct approach in Proposition 
\ref{linear-rate-Halpern} while the latter gives the slighty better constant 
`$8$' in the rate for $d(y_{n+1},y_n).$

\section{Rates of metastability}\label{section-rates-metastability}

Recall that if $X$ is a metric space and $(a_n)$ is a sequence in $X$, a function $\Omega:\N\times \N^\N\to\N$ is a rate of metastability 
of $(a_n)$ if it satisfies the following: for all  $k\in\N$ and  all $g:\N \to \N$, there exists $N\leq \Omega(k,g)$ such that 
\begin{align*}\label{def-metastability}
\forall i,j \in [N, N + g(N)] \left(d(a_i, a_j) \leq \frac{1}{k + 1}\right).
\end{align*}
Noneffectively, the above statement of metastability is trivially equivalent 
to the Cauchy property of $(a_n)$ and hence to its convergence if $X$ 
is complete. Whereas 
there are no computable rates of convergence for the iterations we consider 
(as a consequence of \cite{Neu15}), effective rates of metastability 
can be extracted even from highly noneffective convergence proofs 
(as the one given in \cite{CunPan11} using Banach limits and hence 
the axiom of choice) by general tools from proof theory. See \cite{Koh08} 
as well as
the recent survey \cite{Kohlenbach(ICM)}, where also a short history 
of metastability is given which goes back to Kreisel's seminal work in 
the early 50's (\cite{Kreisel(51),Kreisel(52)}), while the term `metastability'
was coined by Tao \cite{Tao(07)} who in turn refers to Jennifer Chayes' 
concept of a `metastability principle'. 
\\[1mm]
The following result shows that, as in the case of ($T$-)asymptotic regularity, there is a strong relation 
between rates of metastability of the Tikhonov-Mann iteration $(x_n)$ and the ones of the modified Halpern iteration $(y_n)$. 
The setting is the same as in Section~\ref{section-rates-as-reg}.

\begin{proposition}\label{prop:metastability-MH-to-TM}
Assume that $(\cmh4_q)$ holds and let $\Omega:\N\times \N^\N\to\N$. Define $\Omega':\N \times \N^\N \to \N$ by 
\begin{align}
\Omega'(k,g) &= \widetilde{\Omega}(3k+2, g, \alpha(3k+2)),
\end{align}
where $\alpha$ is given by \eqref{def-alpha} and $\widetilde{\Omega}:\N\times \N^\N \times\N\to\N$ is defined  by
\[
\widetilde{\Omega}(k, g, q) = \Omega(k, g_q) + q, 
\]
with $g_q:\N\to \N$, $g_q(n)= g(n + q) + q$.
\begin{enumerate}
\item\label{xn-un-metastability} If  $\Omega$ is  a rate of metastability of one of the sequences $(x_n)$, $(u_n)$, 
then  $\Omega'$ is  a rate of metastability of the other one. 
\item\label{xn-yn-metastability} Suppose that $y_0 = (1 - \beta_0)u + \beta_0 x_0$. 
If one of the sequences $(x_n)$, $(y_n)$ is Cauchy with rate of metastability $\Omega$, then the other one 
 is Cauchy with rate of metastability $\Omega'$.
\end{enumerate}
\end{proposition}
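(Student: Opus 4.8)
The plan is to lift the triangle-inequality argument of Proposition~\ref{prop:asym-reg-MH-TM} to the level of metastability, using as the only analytic input the fact that under $(\cmh4_q)$ one has $d(x_n,u_n)\le \frac{1}{3(k+1)}$ for all $n\ge \alpha(3k+2)$ (Lemma~\ref{alpha-rconv-dxnun}, with $\alpha$ from \eqref{def-alpha}). Everything else is bookkeeping of the counterfunction $g$. I would prove item (i) first, in the direction from $(u_n)$ to $(x_n)$, and then observe that the reverse direction is symmetric and that (ii) is immediate from Proposition~\ref{main-link}.

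First I would fix $k\in\N$ and $g:\N\to\N$, assume $\Omega$ is a rate of metastability of $(u_n)$, and set $q=\alpha(3k+2)$. Applying the metastability property of $(u_n)$ to the parameter $3k+2$ and the shifted counterfunction $g_q$ produces some $N_0\le \Omega(3k+2,g_q)$ with
\[ d(u_i,u_j)\le \tfrac{1}{(3k+2)+1}=\tfrac{1}{3(k+1)}\quad\text{for all }i,j\in[N_0,N_0+g_q(N_0)]. \]
Next I would put $N=N_0+q$, so that $N\le \Omega(3k+2,g_q)+q=\widetilde{\Omega}(3k+2,g,q)=\Omega'(k,g)$, exactly matching the definition of $\Omega'$. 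The shift by $q$ does two things at once: it guarantees $N\ge q=\alpha(3k+2)$, so the $\alpha$-estimate applies to every index $\ge N$; and, since $g_q(n)=g(n+q)+q$, it yields the key identity $N_0+g_q(N_0)=N+g(N)$, whence $[N,N+g(N)]\subseteq[N_0,N_0+g_q(N_0)]$. Consequently, for all $i,j\in[N,N+g(N)]$ one simultaneously has $d(u_i,u_j)\le \frac{1}{3(k+1)}$ and $d(x_i,u_i),\,d(x_j,u_j)\le \frac{1}{3(k+1)}$, so the triangle inequality
\[ d(x_i,x_j)\le d(x_i,u_i)+d(u_i,u_j)+d(u_j,x_j)\le \tfrac{1}{k+1} \]
closes this direction. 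The direction from $(x_n)$ to $(u_n)$ is entirely symmetric, because the bound on $d(x_n,u_n)$ is symmetric in the two sequences; this proves (i). For (ii), Proposition~\ref{main-link} gives $y_n=u_n$ for all $n$ whenever $y_0=(1-\beta_0)u+\beta_0 x_0$, so (ii) follows at once from (i) by substituting $(y_n)$ for $(u_n)$.

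The one delicate point I would check most carefully is the counterfunction arithmetic. The hard part is not any estimate but verifying that the interval $[N_0,N_0+g_q(N_0)]$ delivered by $\Omega$ at $N_0$, once translated by $q$, covers precisely $[N,N+g(N)]$ while the translation simultaneously pushes $N$ beyond the threshold $\alpha(3k+2)$ at which the $\alpha$-estimate switches on. This is the standard mechanism by which a metastability bound absorbs an additive tail coming from a sequence that converges to $0$ with a known rate, and the identity $N_0+g_q(N_0)=N+g(N)$ is exactly what makes the three $\frac{1}{3(k+1)}$ contributions add up to $\frac{1}{k+1}$.
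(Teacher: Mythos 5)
Your proposal is correct and follows essentially the same route as the paper's proof: the paper merely packages your shift-by-$q$ argument as a standalone claim (for arbitrary $k$, $g$, $q$) before instantiating it with $k:=3k+2$ and $q:=\alpha(3k+2)$, whereas you inline it; the key identity $N_0+g_q(N_0)=N+g(N)$, the triangle inequality with three $\frac{1}{3(k+1)}$ contributions, the symmetry remark for the converse direction, and the reduction of (ii) to (i) via Proposition~\ref{main-link} all match the paper exactly.
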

\begin{proof}
\begin{enumerate}
\item Assume first that $\Omega$ is a rate of metastability of $(u_n)$.\\[1mm]
{\bf Claim:} For all $k \in \N$, $g : \N \to \N$, $q \in \N$, there exists $N\in \N$ such that 
\[
q \leq N \leq \widetilde{\Omega}(k, g, q)  \text{~and~} \forall i,j \in [N, N + g(N)] 
\left(d(u_i, u_j) \leq \frac{1}{k + 1}\right). 
\]
{\bf Proof of claim:} Let $k \in \N$, $g : \N \to \N$, $q \in \N$. As $\Omega$ is a rate 
of metastability of $(u_n)$, it follows that there exists $N_0 \leq \Omega(k, g_q)$ such that 
\[
\forall i, j \in [N_0, N_0 + g_q(N_0)] \left(d(u_i, u_j) \leq \frac{1}{k + 1}\right).
\]
Let $N := N_0 + q$. Then $q \leq N \leq \Omega(k, g_q)+q =\widetilde{\Omega}(k, g, q)$. 
Furthermore, $[N, N + g(N)] = [N_0 + q, N_0 + q + g(N_0 + q)] = [N_0 + q, N_0 + g_q(N_0)] \subseteq [N_0, N_0 + g_q(N_0)]$,
hence $d(u_i, u_j) \leq \frac{1}{k + 1}$ for all $i, j \in [N, N+g(N)]$. \hfill $\blacksquare$\\[2mm]
Let $k \in \N$ and $g : \N \to \N$. Apply the claim for $k := 3k+2$, $g$ and $q:=\alpha(3k+2)$ to get the existence of
$N\in\N$ such that $\alpha(3k+2)\leq N\leq \widetilde{\Omega}(3k+2, g, \alpha(3k+2))=\Omega'(k, g)$ such that
\begin{equation}\label{meta-ineq-1}
\forall i, j \in [N, N + g(N)] \left(d(u_i, u_j) \leq  \frac{1}{3(k + 1)} \right). 
\end{equation}
It follows  that for all $i,j \in [N, N + g(N)]$, 
\begin{align*}
d(x_i, x_j) &\leq d(x_i, u_i) + d(u_i, u_j) + d(u_j, x_j) \\
& \leq \frac{1}{3(k + 1)} +  d(x_i, u_i) + d(u_j, x_j)  
\quad \text{by \eqref{meta-ineq-1}}\\
&\leq \frac{1}{3(k + 1)} + \frac{1}{3(k + 1)} + \frac{1}{3(k + 1)} = \frac{1}{k + 1},
\end{align*}
as $i,j\geq N\geq  \alpha(3k+2)$, and $\alpha$ is a rate of convergence towards $0$ of 
$(d(x_n,u_n))$, by Lemma~\ref{alpha-rconv-dxnun}.  

The proof for the case when $\Omega$ is a rate of metastability of $(x_n)$ is similar.
\item Apply Proposition~\ref{main-link} and \eqref{xn-un-metastability}.
\end{enumerate}
\end{proof}

The main results of \cite{KohSch12} are quantitative versions of the strong convergence, 
proved in \cite{CunPan11}, of the modified Halpern iteration $(y_n)$ in complete CAT(0) spaces. 
These quantitative versions provide effective uniform rates of metastability for $(y_n)$ (see \cite[Theorems~4.1, 4.2]{KohSch12} and also note \cite{KohLeu-Addendum} for a numerical improvement).
In Subsection~\ref{rates-HM-subsection} we improved  the quantitative results on the asymptotic 
regularity of $(y_n)$ obtained in \cite[Propositions~6.1,~6.2]{KohSch12} by weakening the hypothesis $(\cmh6_q)$ to $(\cmh5_q)$. 
One can easily see that this eliminates the hypothesis  $(\cmh6_q)$ in favor of  $(\cmh5_q)$  also
in  \cite[Theorems~4.1, 4.2]{KohSch12}, as it is not used in their 
proofs except via the rate of asymptotic regularity.
Hence, for CAT(0)  spaces,  new rates of metastability for $(y_n)$ are obtained by  considering the ones from \cite{KohSch12} with the new 
rates of ($T$-)asymptotic regularity computed in Subsection~\ref{rates-HM-subsection}. 
By Proposition~\ref{prop:metastability-MH-to-TM}.\eqref{xn-yn-metastability}, it follows that we can compute rates of 
metastability for the Tikhonov-Mann iteration $(x_n)$ in CAT(0) spaces, assuming that $(\cmh1_q)$ (or, equivalently for $\beta_n>0$,  $(\cmh1^*_q)$) and $(\cmh2_q)$-$(\cmh5_q)$ hold.

\section{Conclusions}

In this paper we showed that there is a strong relation between the modified Halpern iteration and the Tikhonov-Mann iteration for nonexpansive mappings. Thus, asymptotic regularity and strong convergence results  can be translated from one iteration to the other. This translation  holds also for quantitative versions of these results, providing rates of asymptotic regularity and rates of metastability. 
A future direction of research is to explore similar connections to other modified versions of the Halpern and Mann iterations.  One candidate  is the alternating Halpern-Mann iteration, introduced recently by Dinis and Pinto \cite{DinPin22}. 

By applying a lemma on sequences of real numbers due to Sabach and Shtern \cite[Lemma 3]{SabSht}, we obtained, for a particular choice of the scalars, linear rates of asymptotic regularity for the modified Halpern and Tikhonov-Mann iterations. Previous results guaranteed only quadratic such rates.  
This lemma was applied for the first time in \cite{SabSht} to get  linear rates of asymptotic regularity for the sequential averaging method (SAM), developed in \cite{Xu04}. As a consequence, one gets linear rates for the Halpern iteration also obtained (with an optimal constant) in the 
case of Hilbert spaces using a different technique in \cite{Lieder21}. 
Leu\c{s}tean and Pinto \cite{LeuPin22} computed, using  the same method,  linear rates for the alternating Halpern-Mann iteration.
Recently, the first and the third author in \cite{CheLeu23} applied (a version of) \cite[Lemma 3]{SabSht}  to  other classes of nonlinear iterations and, as a result, obtain linear rates of asymptotic regularity for these iterations.
  
 \mbox{}

{\bf Acknowledgement:} The second author has been supported by the 
German Science Foundation (DFG Project KO 1737/6-2).

\end{document}